\theoremstyle{plain}
\numberwithin{equation}{section}
\theoremstyle{plain}
\newtheorem{theorem}{Theorem}[section]
\newtheorem{corollary}[theorem]{Corollary}
\newtheorem{lemma}[theorem]{Lemma}
\newtheorem{proposition}[theorem]{Proposition}
\theoremstyle{definition}
\newcommand{\ands}{\quad\mbox{and}\quad}
\newcommand{\kernel}{\operatorname{Ker}}
\newcommand{\degr}{\operatorname{deg}}
\newcommand{\Dom}{\operatorname{Dom}}
\newcommand{\codim}{\operatorname{codim}}
\newcommand{\Rat}{{\mathrm{Rat}}}
\newcommand{\Ran}{\operatorname{Ran}}
\newcommand{\diag}{\operatorname{Diag}}
\newcommand{\Row}{\operatorname{Row}}
\newcommand{\Col}{\operatorname{Col}}
\newcommand{\BC}{{\mathbb C}}
\newcommand{\BT}{{\mathbb T}}
\newcommand{\BD}{{\mathbb D}}
\newcommand{\BP}{{\mathbb P}}
\newcommand{\al}{\alpha}
\newcommand{\be}{\beta}
\newcommand{\ga}{\gamma}
\newcommand{\de}{\delta}
\newcommand{\Th}{\Theta}
\newcommand{\la}{\lambda}
\newcommand{\Up}{\Upsilon}
\newcommand{\Om}{\Omega}
\newcommand{\fA}{{\mathfrak A}}
\newcommand{\cC}{{\mathcal C}}\newcommand{\cD}{{\mathcal D}}
\newcommand{\cO}{{\mathcal O}}\newcommand{\cP}{{\mathcal P}}
\newcommand{\cX}{{\mathcal X}}
\newcommand{\ov}[1]{{\overline{#1}}}
\newcommand{\mat}[1]{\ensuremath{\begin{bmatrix} #1 \end{bmatrix}}}
\begin{document}
\begin{frontmatter}
\title{A Toeplitz-like operator with rational matrix symbol having poles on the unit circle:\ Invertibility and Riccati equations}
\author[G.J. Groenewald]{G.J. Groenewald}\ead{Gilbert.Groenewald@nwu.ac.za}
\author[S. ter Horst]{S. ter Horst}\ead{Sanne.TerHorst@nwu.ac.za}
\author[J. Jaftha]{J. Jaftha}\ead{Jacob.Jaftha@uct.ac.za}
\author[A.C.M. Ran]{A.C.M. Ran}\ead{a.c.m.ran@vu.nl}

\address[G.J. Groenewald]{School of Mathematical and Statistical Sciences,
North-West University,
Research Focus: Pure and Applied Analytics,
Private~Bag X6001,
Potchefstroom 2520,
South Africa.}

\address[S. ter Horst]{School of Mathematical and Statistical Sciences,
North-West University,
Research Focus: Pure and Applied Analytics,
Private~Bag X6001,
Potchefstroom 2520,
South Africa
and
DSI-NRF Centre of Excellence in Mathematical and Statistical Sciences (CoE-MaSS),
Johannesburg,
South Africa}

\address[J. Jaftha]{Numeracy Center, University of Cape Town, Rondebosch 7701, Cape Town, South Africa}

\address[A.C.M. Ran]{Department of Mathematics, Faculty of Science, VU Amsterdam, De Boelelaan 1111, 1081 HV Amsterdam, The Netherlands and Research Focus: Pure and Applied Analytics, North-West University, Potchefstroom, South Africa}


\begin{abstract}
This paper is a continuation of the work on unbounded Toeplitz-like operators $T_\Om$ with rational matrix symbol $\Om$ initiated in Groenewald et.\ al (Complex Anal.\ Oper.\ Theory 15, 1(2021)), where a Wiener-Hopf type factorization of $\Om$ is obtained and used to determine when $T_\Om$ is Fredholm and compute the Fredholm index in case $T_\Om$ is Fredholm. Due to the high level of non-uniqueness and complicated form of the Wiener-Hopf type factorization, it does not appear useful in determining when $T_\Om$ is invertible. In the present paper we use state space methods to characterize invertibility of $T_\Om$ in terms of the existence of a stabilizing solution of an associated nonsymmetric discrete algebraic Riccati equation, which in turn leads to a pseudo-canonical factorization of $\Om$ and concrete formulas of $T_\Om^{-1}$.
\end{abstract}

\begin{keyword}
Toeplitz operators, unbounded operators, invertibility, Riccati equations, pseudo-canonical factorization

\emph{2020 MSC} Primary 47B35, 47A53; Secondary 47A68
\end{keyword}

\end{frontmatter}

{\date{}}


\section{Introduction}

In two recent papers \cite{GtHJR21,GtHJRsub}, we explored the matrix analogue of an unbounded Toeplitz-like operator that was investigated in \cite{GtHJ18,GtHJ19a,GtHJ19b} for scalar rational symbols with poles on the unit circle $\BT$. While many of the classical operator theory topics, like Fredholmness, invertibility and spectrum, are well understood in the scalar case, the case of matrix symbols appears to be more intricate. In \cite{GtHJR21} a Wiener-Hopf type factorization was obtained, from which the Fredholm index can be determined, in case the symbol has no zeroes on $\BT$. However, this Wiener-Hopf type factorization has a high level of non-uniqueness and, unlike in the classical case, generally does not lead to a diagonalization of the symbol. As a result, although some further Fredholm characteristics can be determined from the Wiener-Hopf type factorization \cite{GtHJRsub}, it does not seem to be an adequate tool to compute the dimensions of the kernel and the cokernel, nor does it seem to give a clear characterization of invertibility. In the present paper we take a different approach to the question of invertibility of this Toeplitz-like operator, using Riccati equations and pseudo-canonical factorization.

Unbounded Toeplitz operators appeared first in a paper of Hartman and Wintner \cite{HW50} in 1950, but only became an active topic with the seminal paper of Sarason \cite{S08} in connection to truncated Toeplitz operators; see \cite{O'L22} for how unbounded Toeplitz operators with matrix symbols come into play. More recently, kernels of unbounded Toeplitz operators appeared in the study of nearly backward shift invariant subspaces and Toeplitz inverses \cite{CMP21,CPpre}.

Next we introduce some notation, which is required to define our Toeplitz-like operator and state our main results. We write $\Rat$ for the space of rational functions, $\Rat(\BT)$ for the functions in $\Rat$ that only have poles on the unit circle $\BT$, $\Rat_0(\BT)$ for the strictly proper functions in $\Rat(\BT)$, $\cP$ for the space of polynomials and for positive integers $k$ we indicate the polynomials of degree at most $k$ by $\cP_k$, extending it to all integers by setting $\cP_k=\{0\}$ in case $k\leq 0$. For positive integers $m$ and $n$, we indicate the spaces of $m \times n$ matrices with entries from these function spaces by $\Rat^{m\times n}$, $\Rat_0(\BT) ^{m\times n}$, etc. In the case of vector functions, when $n=1$, we will just write $m$ instead of $m \times 1$. For $1<p<\infty$, $L^p$ and $H^p$ denote the Lebesgue space and Hardy space, respectively, and $K^p$ is the standard complement of $H^p$ in $L^p$. With $L_m^p$,  $H_m^p$ and $K_m^p$ we indicate the spaces of vectors of length $m$ with entries from $L^p$ and $H^p$, respectively.

Let $\Om \in\Rat^{m\times  m}$ with possibly poles on $\BT$ and $\det\Om\not \equiv 0$, and let $1<p<\infty$. We then define the Toeplitz-like operator $T_\Om \left (H^p_m \rightarrow H^p_m \right )$ by
\begin{equation}\label{TOm}
\begin{aligned}
\Dom(T_\Om)=\left\{\begin{array}{ll}
f\in H^p_m :&
\begin{array}{l}
\Om f = h + \eta \textrm{ where } h\in L^p_m(\BT),\\
\ands \eta \in\Rat_0^{m}(\BT) \\
\end{array}\\
\end{array}
\right \}, \\
T_\Om f = \BP h \textrm{ with } \BP \textrm{ the Riesz projection of } L^p_m(\BT) \textrm{ onto }H^p_m.
\end{aligned}
\end{equation}
By the Riesz projection, $\BP$, we mean the projection of $L^p_m$ onto $H^p_m$, as discussed in \cite[pages 149--153]{H62}.

As usual, $\Om \in\Rat^{m\times  m}$ has a pole at $z_0\in\BC\cup \{\infty\}$, if any of its entries has a pole at $z_0$. In case $\det \Om\not\equiv 0$, a zero of $\Om$ is a pole of its inverse $\Om^{-1}(z):=\Om(z)^{-1}$. It is not necessarily the case that the zeroes of $\Om$ correspond to the zeroes of $\det \Om$, and $\Om$ can have both a pole and a zero at the same point $z_0\in\BC\cup \{\infty\}$. It was proved in \cite{GtHJR21} that $T_\Om$ is Fredholm if and only if $\Om$ has no zeroes on $\BT$. In particular, for $T_\Om$ to be invertible, that is, bijective from $\Dom(T_{\Om})$ onto $H^p_m$, it is necessary that $\Om$ has no zeroes on $\BT$.

In the classical case, invertibility of Toeplitz operators with rational symbols can be studied via Riccati equations and canonical factorizations associated with state space realizations of the symbol, cf., \cite{BGKR08,BGKR10}. In the present paper, we follow the approach of \cite{FKR10}. Let $\Om\in\Rat^{m \times m}$ and assume $\Om$ is given by a minimal state space realization of the form
\begin{equation}\label{realOmega}
\Om(z)=R_0+zC(I-zA)^{-1}B+\gamma(z I-\alpha)^{-1}\beta,
\end{equation}
with $R_0\in\BC^{m \times m}$ and with $A,B,C$ and $\al,\be,\ga$ matrices of appropriate sizes such that $A$ has all its eigenvalues in the open unit disc $\BD$ and $\al$ has all its eigenvalues in the closed unit disc $\ov{\BD}$, that is, $A$ is stable and $\al$ is semi-stable (in \cite{FKR10} $\al$ is also stable). Minimality in this setting means that one cannot find a representation for $\Om$ of this form with $A$ and $\al$ matrices of smaller size; equivalently, the triples $(C,A,B)$ and $(\ga,\al,\be)$ both provides observable and controllable discrete-time linear systems, cf., \cite{HRvS07}. Despite the fact that $\Om$ has poles on $\BT$, so that $\al$ has eigenvalues on $\BT$, there is a fairly direct analogue of the canonical factorization result of \cite{FKR10}.

\begin{theorem}\label{T:Main1}
Let $\Om\in\Rat^{m \times m}$ with $\det \Om \not\equiv 0$ and no zeroes on $\BT$ and assume that $\Om$ is given by the minimal realization \eqref{realOmega} with $A$ stable and $\al$ semi-stable. Then the following are equivalent:
\begin{itemize}
\item[(i)] There exists a matrix $Q$ such that $R_0-\gamma QB$ is invertible, $Q$ satisfies the nonsymmetric discrete algebraic Riccati equation:
\begin{equation}\label{Ricc}
Q=\alpha QA+\left(\beta-\alpha QB\right)(R_0-\gamma QB)^{-1} (C-\gamma QA),
\end{equation}
and  such that the matrices
\begin{equation}\label{Acirc-alphacirc}
\begin{aligned}
A_{\circ}&:=A-B(R_0-\gamma QB)^{-1}(C-\gamma QA),\\
\alpha_{\circ}&:=\alpha -\left(\beta-\alpha QB\right)(R_0-\gamma QB)^{-1}\ga
\end{aligned}
\end{equation}
are both stable.

\item[(ii)]
$\Omega$ has a right pseudo-canonical factorization $\Omega(z)=\Psi(z)\Theta(z)$, i.e., $\Psi$ and $\Theta$  are $m\times m$ rational matrix functions with $\det\Psi\not\equiv 0$ and $\det\Theta \not\equiv 0$ and such that $\Theta$ has poles only outside or on the unit circle $\BT$, $\Theta^{-1}$ has poles only outside $\BT$, $\Psi$ has poles only inside or on the unit circle $\BT$, and $\Psi^{-1}$ has poles only inside the unit circle $\BT$.

\end{itemize}
Moreover, if $Q$ is a solution to \eqref{Ricc} such that $A_{\circ}$ and $\al_\circ$ are stable, then a right pseudo-canonical factorization is obtained as follows:\ Let $\delta$ and $D$ be invertible matrices such that $\delta D=R_0-\gamma QB$ and set
\begin{equation}\label{Ccirc-betacirc}
C_{\circ}=\delta^{-1}(C-\gamma QA) \ands
\beta_{\circ}=(\beta-\alpha QB)D^{-1}.
\end{equation}
Then $\Omega(z)=\Psi(z)\Theta(z)$ holds with $\Psi$ and $\Theta$ defined as
\begin{equation}\label{ThetaPsi}
\Theta(z)=D+zC_\circ(I-zA)^{-1}B \ands \Psi(z)=\delta+\gamma(zI-\alpha)^{-1}\beta_{\circ},
\end{equation}
and the inverses of $\Psi$ and $\Theta$ are given by
\begin{equation}\label{ThetaPsiInvReal}
\begin{aligned}
\Theta^{-1}(z)& =D^{-1}-zD^{-1}C_{\circ}(I-zA_\circ)^{-1}BD^{-1},\\
\Psi^{-1}(z)& = \delta^{-1}-\delta^{-1}\gamma (z I-\alpha_\circ)^{-1}\beta_{\circ} \delta^{-1}.
\end{aligned}
\end{equation}
Furthermore, the solution $Q$ of the Riccati equation \eqref{Ricc} so that $A_\circ$ and $\al_\circ$ are stable is unique. Finally, the realizations in \eqref{ThetaPsi} and \eqref{ThetaPsiInvReal} for $\Theta$, $\Psi$, $\Theta^{-1}$ and $\Psi^{-1}$ are minimal.
\end{theorem}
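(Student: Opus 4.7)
The plan is to follow the state-space factorization framework of \cite{FKR10}, which treats the case where $\al$ is stable, and to adapt it to the semi-stable setting. The underlying algebraic identities do not depend on the precise location of $\sigma(\al)$; what changes is the pole-location bookkeeping for $\Psi$ and $\Psi^{-1}$ and a more careful treatment of uniqueness and minimality.

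For the direction (i)$\Rightarrow$(ii), given $Q$ as in (i), I would define $\Theta$ and $\Psi$ by \eqref{ThetaPsi} and verify $\Om=\Psi\Theta$ by direct state-space multiplication. The key observations are the resolvent identities $zA(I-zA)^{-1}=(I-zA)^{-1}-I$ and $(zI-\al)^{-1}\al=z(zI-\al)^{-1}-I$, which, applied to the cross terms in the expansion of $\Psi(z)\Theta(z)$, reduce the required equality $\Om=\Psi\Theta$ precisely to the Riccati relation \eqref{Ricc} together with the compatibility identifications $\de D=R_0-\ga QB$, $\de C_\circ=C-\ga QA$ and $\be_\circ D=\be-\al QB$. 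The inversion formulas \eqref{ThetaPsiInvReal} then follow from standard state-space inversion applied to $\Theta$ and $\Psi$ with invertible feedthrough $D$ and $\de$. The pole-location claims of (ii) are read off the four realizations using $\sigma(A),\sigma(A_\circ),\sigma(\al_\circ)\subset\BD$ and $\sigma(\al)\subset\ov{\BD}$.

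For (ii)$\Rightarrow$(i), I would begin with a right pseudo-canonical factorization $\Om=\Psi\Theta$, choose minimal realizations of $\Theta$ and $\Psi$ whose state-space dimensions are $\dim A$ and $\dim\al$ and whose main matrices inherit the pole-location constraints of the factorization, form the cascade realization of $\Psi\Theta$, and match it with the minimal realization \eqref{realOmega} of $\Om$ by invoking the uniqueness up to similarity of minimal state-space realizations. The desired $Q$ then arises as the off-diagonal block of the similarity bringing the cascade into a block-decoupled form over the $A$- and $\al$-spaces; \eqref{Ricc} is precisely the compatibility condition for this decoupling, and the identities \eqref{Acirc-alphacirc} and \eqref{Ccirc-betacirc} drop out. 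Stability of $A_\circ$ and $\al_\circ$ is transferred from the pole-location constraints on $\Theta^{-1}$ and $\Psi^{-1}$ (poles strictly outside and strictly inside $\BT$, respectively). This decoupling is the main technical obstacle: in contrast with \cite{FKR10}, both $\sigma(\al)$ and the $\al$-side spectrum of the new realization may meet $\BT$, so the usual Sylvester-type spectral separation is unavailable and must be replaced by the uniqueness-of-minimal-realization argument.

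Uniqueness of the stabilizing $Q$ follows the standard invariant-subspace argument for discrete algebraic Riccati equations: $Q$ parametrizes a graph-type invariant subspace of an associated block matrix built from \eqref{realOmega}, and the stability of $A_\circ$ and $\al_\circ$ singles out the unique stabilizing invariant subspace by spectral dichotomy, adapted here to accommodate the eigenvalues of $\al$ on $\BT$. Finally, minimality of the realizations in \eqref{ThetaPsi} and \eqref{ThetaPsiInvReal} follows by a McMillan-degree count: subadditivity of the McMillan degree together with minimality of \eqref{realOmega} yields
\[
\degr\Om\le\degr\Psi+\degr\Theta\le\dim\al+\dim A=\degr\Om,
\]
forcing equality throughout, so that the realizations of $\Psi$ and $\Theta$ in \eqref{ThetaPsi} are minimal; those of $\Psi^{-1}$ and $\Theta^{-1}$ in \eqref{ThetaPsiInvReal} have the same state-space dimensions and are therefore minimal as well.
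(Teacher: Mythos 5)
Your route differs fundamentally from the paper's: you propose re-deriving the state-space factorization machinery of \cite{FKR10} directly for the semi-stable $\alpha$, whereas the paper instead dilates to $\Om_r(z)=\Om(rz)$ for $1<r<r_0$, so that $\alpha_r=\alpha/r$ is strictly stable, applies \cite[Theorem~1.1]{FKR10} verbatim to $\Om_r$, and then pulls the result back to $\Om$. The dilation trick reduces the problem with poles on $\BT$ to the already-solved pole-free case; its price is the need to show that the stabilizing solution $Q=Q_r$ (and hence $A_\circ$, $\alpha_\circ$) does not depend on $r$, which the paper does via the finite-section Riccati difference equation (the $r$-dependence is conjugated away by a diagonal similarity). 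Your direct approach avoids that $r$-uniformity issue entirely, and your (i)$\Rightarrow$(ii) step and the McMillan-degree minimality count are sound and match the paper.

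The genuine gap is in (ii)$\Rightarrow$(i) and in the uniqueness claim. You flag the obstacle yourself: the Sylvester-type spectral separation that underlies the decoupling similarity and the invariant-subspace uniqueness argument is not available once $\sigma(\alpha)$ meets $\BT$, and you say this "must be replaced by the uniqueness-of-minimal-realization argument" and by "spectral dichotomy, adapted here to accommodate the eigenvalues of $\alpha$ on $\BT$." Neither replacement is actually carried out. The state-space similarity theorem gives you \emph{a} similarity between the cascade realization of $\Psi\Theta$ and the given realization of $\Om$, but nothing about that theorem forces the similarity to respect the $(A,\alpha)$ block structure, produce a block-triangular form, or yield the graph subspace from which $Q$ is read off; that forcing is precisely what the spectral separation provides in \cite{FKR10} and what is missing here. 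Likewise "adapted spectral dichotomy" is not a proof of uniqueness when the relevant operator has spectrum on $\BT$ — this is exactly the subtlety the paper side-steps by working with $\Om_r$, where the dichotomy is clean, and then transferring $Q$ back. You also need to be careful with the claim that stability of $\alpha_\circ$ is read off from poles of $\Psi^{-1}$: in the paper this actually requires a separate step (from $\Psi^{-1}=\Theta\Om^{-1}$ one deduces $\Psi^{-1}$ has no poles on $\BT$, and only then, via minimality of the realization of $\Psi^{-1}$, is $\alpha_\circ$ stable rather than merely semi-stable), since what comes out of the $\Om_r$-argument a priori is only semi-stability of $\alpha_\circ$. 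As written, your proposal asserts the conclusion of that step without supplying it.
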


The above result is proved in Section \ref{S:FormsForOmr} and is essentially obtained by specifying Theorem 1.1 of \cite{FKR10} for the function $\Om_r$ defined by
\begin{equation}\label{Omr}
\Om_r(z)=\Om(rz)
\end{equation}
for $1<r$ small enough, so that $\Om_r$ does not have poles and zeroes on $\BT$. More generally, for any function $f$, scalar-, vector- or matrix-valued, and scalar $r>0$ we write $f_r$ for the function $f_r(z)=f(rz)$ defined for $z\in\BC$ for which $rz$ is in the domain of $f$.

In order to characterize invertibility of $T_\Om$ more is required than what is in \cite{FKR10}, we want to compare invertibility of the unbounded operator $T_\Om$ with invertibility of the bounded Toeplitz operator $T_{\Om_r}$. Note that invertibility in both cases means bijectivity on its domain of definition. Hence, the inverse of $T_\Om$ will be bounded. For $r>1$, we define the annulus
\begin{equation}\label{annulus}
\fA_r:=\{z\in\BC \colon r^{-1}<|z|<r\}.
\end{equation}
It turns out that $T_\Om$ and $T_{\Om_r}$ can be compared, not only with respect to invertibility, but even with respect to their Fredholm properties, in case they are Fredholm.

\begin{theorem}\label{T:Main2}
Let $\Om\in\Rat^{m \times m}$ with $\det \Om \not\equiv 0$ and assume that $\Om$ has no zeroes on $\BT$. Define $\Om_r$ by \eqref{Omr}. Let $r_0>1$ be such that $\Om$ has no zeroes in the annulus $\fA_{r_0}$ and no poles in $\fA_{r_0}\setminus\BT$. Then $T_\Om$ is Fredholm and for each $1<r<r_0$, $T_{\Om_r}$ is bounded and Fredholm and we have
\[
\dim \kernel T_\Om= \dim \kernel T_{\Om_r} \ands \codim \Ran T_\Om= \codim \Ran T_{\Om_r}.
\]
In particular, $T_\Om$ is invertible if and only if $T_{\Om_r}$ is invertible for some (and hence all) $1<r<r_0$.
\end{theorem}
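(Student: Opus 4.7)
The approach is to construct an explicit linear isomorphism $V_r:\kernel T_{\Om_r}\to\kernel T_\Om$ via the substitution $(V_rg)(z)=g(z/r)$, and then to deduce the cokernel equality from a Fredholm index computation. As a preliminary, for $1<r<r_0$ the substitution $z\mapsto rz$ sends $\BT$ into the circle $\{|w|=r\}\subset\fA_{r_0}\setminus\BT$, where $\Om$ has neither poles nor zeroes; hence $\Om_r$ has neither poles nor zeroes on $\BT$, so $T_{\Om_r}$ is a bounded Fredholm operator on $H^p_m$ by classical Toeplitz theory. That $T_\Om$ is Fredholm is the main result of \cite{GtHJR21}.

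Given $g\in\kernel T_{\Om_r}$, the function $V_rg$ is analytic on $\{|z|<r\}$ and thus lies in $H^p_m$. Near each pole $z_0\in\BT$ of $\Om$, the analyticity of $g(\cdot/r)$ at $z_0$ ensures that $\Om\cdot V_rg$ inherits only a strictly proper rational singular part there, so we may write $\Om V_rg=\eta+h$ with $\eta\in\Rat_0^m(\BT)$ and $h\in L^p_m(\BT)$; in particular $V_rg\in\Dom(T_\Om)$. The key step is to verify $\BP h=0$. Since $\Om$ has no poles in $\fA_{r_0}\setminus\BT$, the function $h$ extends analytically to the annulus $\{1<|z|<r\}$, so for $k\geq 0$ the Fourier coefficient $\hat h(k)$ can be computed by a contour integral over $|z|=s$ with $1<s<r$. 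The $\eta$-contribution vanishes by a standard residue-at-infinity argument for strictly proper rational functions with poles only on $\BT$. For the $\Om(z)g(z/r)$-contribution, the substitution $z=rw$ gives $r^{-k}\cdot(2\pi i)^{-1}\oint_{|w|=s/r}w^{-k-1}\Om_r(w)g(w)\,dw$; all poles of $\Om_rg$ inside $\BD$ have modulus at most $1/r<s/r$, so the contour can be deformed outwards to $\BT$, producing $r^{-k}\widehat{\Om_rg}(k)=0$ because $\Om_rg\in K^p_m$. Hence $T_\Om V_rg=0$.

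For surjectivity of $V_r$ I would show that each $f\in\kernel T_\Om$ extends analytically across $\BT$ to the disk $\{|z|<r_0\}$. The candidate $\tilde f(z):=\Om(z)^{-1}(\tilde h(z)+\eta(z))$, where $\tilde h$ denotes the analytic extension of $h$ from $K^p_m$ to $\{|z|>1\}$ (vanishing at infinity), is analytic on $\{1<|z|<r_0\}$ since $\Om^{-1}$ is analytic on $\fA_{r_0}$ (as $\Om$ has no zeroes there) and $\eta$ is rational with poles only on $\BT$. Since $f=\Om^{-1}(h+\eta)$ on $\BT$, the boundary values of $f$ from inside and of $\tilde f$ from outside agree in $L^p_m$, and a Painlev\'e-type argument then glues them to an analytic extension of $f$ past $\BT$. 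Setting $V_r^{-1}f(z):=f(rz)$ gives an element of $H^p_m$, and a symmetric contour-integration argument shows $\Om_r(V_r^{-1}f)\in K^p_m$, i.e., $V_r^{-1}f\in\kernel T_{\Om_r}$. Injectivity of $V_r$ is immediate by the identity theorem. For the cokernel equality, the Fredholm indices of $T_\Om$ and $T_{\Om_r}$ coincide: both equal the winding number of $\det\Om$ around $0$ on a circle $|z|=s$ with $1<s<r_0$, which is independent of $s$ since $\det\Om$ has neither zeroes nor poles in $\fA_{r_0}\setminus\BT$. Combined with the kernel equality this yields $\codim\Ran T_\Om=\codim\Ran T_{\Om_r}$. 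The main technical obstacle is the contour-deformation step for the kernel comparison, made possible by the precise location of poles of $\Om_r$ in $\BD$ afforded by the hypothesis on $\fA_{r_0}$.
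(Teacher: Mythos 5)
Your kernel comparison is in substance the same isomorphism $f\mapsto f_r$ that the paper uses (Corollary \ref{C:Kernel}), though you implement it by contour deformation rather than by first establishing the operator identity $(T_\Om f)_r=T_{\Om_r}f_r$ on $\cD_r$ as in Proposition \ref{P:TomVsTomr}. Both routes hinge on the same two facts: that $g(\cdot/r)\in\kernel T_{\Om}$ for $g\in\kernel T_{\Om_r}$, and that every $f\in\kernel T_\Om$ extends analytically across $\BT$ (your Painlev\'e/Morera gluing of $f$ with $\Om^{-1}(\tilde h+\eta)$). That last gluing step deserves more care than you give it -- you are matching an $H^p_m$ boundary value from inside with an $L^p_m$ boundary value from the exterior annulus, which is a Smirnov-class removability argument rather than classical Painlev\'e -- but the idea is sound and is essentially what the paper packages as $\kernel T_\Om\subset\cD_r$.

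Where you diverge genuinely, and where there is a real gap, is the cokernel. You propose to get $\codim\Ran T_\Om=\codim\Ran T_{\Om_r}$ from an index equality, asserting that $\Index T_\Om$ equals minus the winding number of $\det\Om$ on a circle $|z|=s$ with $1<s<r_0$. For the bounded operator $T_{\Om_r}$ this is the classical Gohberg--Krein formula; but for the unbounded operator $T_\Om$ (symbol with poles on $\BT$, operator defined only on a dense domain) no such index formula is available off the shelf, and you do not derive one. Proving it is not a small gesture: the index of $T_\Om$ is computed in \cite{GtHJR21} from the non-canonical Wiener--Hopf-type factorization $\Om=\Om_-\Xi\Om_+$ with $\Xi=z^{-k}\Om_\circ P_0$, and translating that into a winding number of $\det\Om$ on $|z|=s$ would require unwinding the contributions of $z^{-k}$, $P_0$ (whose determinant is $z^N$) and the diagonal pole factors $\Om_\circ$ -- in effect redoing a chunk of that paper. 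The actual proof sidesteps the index entirely: it takes a complement $\cX$ of $\Ran T_{\Om_r}$, shows $\cX_{1/r}$ is a complement of $\Ran T_\Om$ by combining $T_\Om(\cD_r)+\cX_{1/r}=\cD_r$ with the closedness of $\Ran T_\Om+\cX_{1/r}$ and density of $\cD_r$, and this in turn uses $T_\Om^{-1}(\cD_r)\subset\cD_r$, which is the one place the Wiener--Hopf-type factorization is genuinely needed. So either supply a proof of the winding-number formula for $\Index T_\Om$ (citing and unpacking the index theorem of \cite{GtHJR21}), or replace the index step by the complement-transport argument.
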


We shall prove Theorem \ref{T:Main2} in Section \ref{S:FredholmCompare}. Our second main result together with \cite{FKR10}, shows that invertibility of $T_\Om$ is equivalent to items (i) and (ii) in Theorem \ref{T:Main1}. With some further work we derive, in Section \ref{S:TOm invert} below, formulas for the inverse of $T_\Om$, as given in the following result.

\begin{theorem}\label{T:Main3}
Let $\Om\in\Rat^{m \times m}$ with $\det \Om \not\equiv 0$ and no zeroes on $\BT$ and assume that $\Om$ is given by the minimal realization \eqref{realOmega} with $A$ stable and $\al$ semi-stable. Then $T_\Om$ is invertible if and only if the Riccati equation \eqref{Ricc} has a solution $Q$ such that $A_\circ$ and $\al_\circ$ in \eqref{Acirc-alphacirc} are stable, or, equivalently, if $\Om$ has a pseudo-canonical factorization $\Om(z)=\Psi(z)\Theta(z)$ as in item (ii) of Theorem \ref{T:Main1}. In that case, the inverse of $T_{\Om}$ is the bounded operator given by
\begin{equation}\label{TOminv}
T_{\Om}^{-1}=T_{\Theta}^{-1}T_{\Psi}^{-1}=T_{\Theta^{-1}}T_{\Psi^{-1}}.
\end{equation}
Moreover, $T_{\Om}^{-1}$ has a block matrix representation $\left[T_{\Om}^{-1}\right]_{i,j}$ with respect to the standard block basis of $H^p_m$ that is given by
\begin{equation}\label{TOminvmat}
\left[T_{\Om}^{-1}\right]_{i,j}=\sum_{k=0}^{\min(i,j)} \Theta_{i-k}^\times \Psi_{j-k}^\times,
\end{equation}
where
\begin{align*}
\Theta_{0}^\times&=D^{-1}, &\Theta_{j}^\times =-D^{-1}(C_\circ)(A_\circ)^{j-1}BD^{-1},\ \   j=1,2,\ldots,\\
\Psi_{0}^\times&=\delta^{-1}, &\Psi_{j}^\times =-\delta^{-1}\gamma \left(\alpha_\circ\right)^{j-1}\left(\beta_\circ\right)\delta^{-1},\ \  j=1,2,\ldots.
\end{align*}
\end{theorem}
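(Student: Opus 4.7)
The plan is to chain Theorems \ref{T:Main1} and \ref{T:Main2} with the classical bounded-symbol factorization theorem \cite[Thm.\ 1.1]{FKR10} to obtain the invertibility characterization, and then to read off the explicit inverse from the state-space realizations in \eqref{ThetaPsi}--\eqref{ThetaPsiInvReal}.

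For the equivalence, Theorem \ref{T:Main2} reduces invertibility of $T_\Om$ to invertibility of the bounded Toeplitz operator $T_{\Om_r}$ for $r>1$ sufficiently close to $1$. A direct rescaling of \eqref{realOmega} yields a minimal realization of $\Om_r$ with state matrices $rA$ and $r^{-1}\alpha$, both stable for such $r$; \cite[Thm.\ 1.1]{FKR10} then applies and characterizes invertibility of $T_{\Om_r}$ by the existence of a stabilizing solution $Q_r$ of the rescaled Riccati equation, equivalently by a right canonical factorization of $\Om_r$. The substitution $Q=rQ_r$, already used in Section \ref{S:FormsForOmr} in the proof of Theorem \ref{T:Main1}, converts this Riccati equation back to \eqref{Ricc} and produces the relations $A_{r,\circ}=rA_\circ$ and $\alpha_{r,\circ}=r^{-1}\alpha_\circ$; for $r$ sufficiently close to $1$ (the same range employed in Section \ref{S:FormsForOmr}), stability transfers in both directions under this correspondence. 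Combined with the equivalence (i)$\Leftrightarrow$(ii) already established in Theorem \ref{T:Main1}, this delivers the three-way equivalence between invertibility of $T_\Om$ and items (i), (ii).

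For the inverse formula, the realizations \eqref{ThetaPsi}--\eqref{ThetaPsiInvReal} show that $\Theta,\Theta^{-1}\in H^\infty_{m\times m}$ (by stability of $A$ and $A_\circ$), so $T_\Theta$ is a bounded bijection on $H^p_m$ with $T_\Theta^{-1}=T_{\Theta^{-1}}$; and $\Psi^{-1}\in\overline{H^\infty}_{m\times m}$ (by stability of $\alpha_\circ$), so $T_{\Psi^{-1}}$ is a bounded (block upper-triangular) Toeplitz operator, whereas $\Psi$ retains its poles on $\BT$ and $T_\Psi$ is unbounded in the sense of \eqref{TOm}. The key identities to verify are $T_\Om=T_\Psi T_\Theta$ and $T_\Psi T_{\Psi^{-1}}=I_{H^p_m}$, which together yield $T_\Om^{-1}=T_\Theta^{-1}T_\Psi^{-1}=T_{\Theta^{-1}}T_{\Psi^{-1}}$. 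For the second identity, expanding the Laurent series of $\Psi^{-1}$ from \eqref{ThetaPsiInvReal} gives, for $f\in H^p_m$, the closed form $(I-\BP)(\Psi^{-1}f)=-\delta^{-1}\gamma(zI-\alpha_\circ)^{-1}c_f$ for a suitable state vector $c_f$, a strictly proper rational vector with poles only strictly inside $\BD$. Consequently $\Psi\cdot T_{\Psi^{-1}}f=f-\Psi(I-\BP)(\Psi^{-1}f)$ admits the required decomposition $h+\eta$ with $h\in L^p_m$ and $\eta\in\Rat_0^m(\BT)$ (the $\eta$ coming from the poles of $\Psi$ on $\BT$), placing $T_{\Psi^{-1}}f\in\Dom(T_\Psi)$; applying $\BP$ and using that the rational pieces with poles strictly inside $\BD$ lie in $K^p_m$ gives $T_\Psi T_{\Psi^{-1}}f=f$. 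The identity $T_\Om=T_\Psi T_\Theta$ is obtained by the same kind of decomposition applied to $g\in\Dom(T_\Om)$ and $\Theta g\in H^p_m$. The main obstacle is precisely this domain bookkeeping: the unbounded operator $T_\Psi$ has a domain specified via a nontrivial rational-summand decomposition, so the bounded Toeplitz identity $T_{fg}=T_fT_g$ cannot be invoked off the shelf.

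Finally, \eqref{TOminvmat} follows by direct block-matrix multiplication. Geometric-series expansion of \eqref{ThetaPsiInvReal} gives $\Theta^{-1}(z)=\sum_{j\geq 0}\Theta_j^\times z^j$ converging on $\ov{\BD}$ (stability of $A_\circ$) and $\Psi^{-1}(z)=\sum_{j\geq 0}\Psi_j^\times z^{-j}$ converging on $\ov{\BC\setminus\BD}$ (stability of $\alpha_\circ$), with the stated coefficients. Hence $T_{\Theta^{-1}}$ is a block-lower-triangular Toeplitz matrix with $(i,j)$-block $\Theta_{i-j}^\times$ for $i\geq j$, and $T_{\Psi^{-1}}$ is a block-upper-triangular Toeplitz matrix with $(i,j)$-block $\Psi_{j-i}^\times$ for $j\geq i$. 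Multiplying these, only summation indices $k\leq\min(i,j)$ contribute, yielding \eqref{TOminvmat}.
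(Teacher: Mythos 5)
Your overall strategy mirrors the paper's: reduce to $T_{\Om_r}$ via Theorem~\ref{T:Main2}, apply \cite[Thm.\ 1.1]{FKR10}, translate back, and then derive \eqref{TOminv}--\eqref{TOminvmat} from the factorization. But there are two genuine problems with the first half, and a different (essentially valid) tactic in the second.

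\textbf{The substitution claim is false.} There is no substitution $Q=rQ_r$ in Section~\ref{S:FormsForOmr}, and one would not work: plugging $Q_r=Q/r$ into the Riccati equation for $\Om_r$ does not reproduce \eqref{Ricc}. What actually happens (see the proof of Proposition~\ref{P:OmrThm}) is that the rescaling factors $r$ and $r^{-1}$ in $A_r=rA$, $C_r=rC$, $\al_r=\al/r$, $\be_r=\be/r$ cancel, so the stabilizing solution $Q_r$ of the Riccati equation attached to $\Om_r$ \emph{already} satisfies \eqref{Ricc} verbatim, without any change of variable. The relations $A_{\circ,r}=rA_\circ$ and $\al_{\circ,r}=r^{-1}\al_\circ$ are then computed directly from $Q_r$, not obtained by substitution.

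\textbf{``Stability transfers in both directions'' is a real gap.} In one direction this is easy: if $A_\circ,\al_\circ$ are stable then $rA_\circ$ and $r^{-1}\al_\circ$ are stable for $r>1$ close to $1$. In the other direction, invertibility of $T_{\Om_r}$ gives $rA_\circ$ and $r^{-1}\al_\circ$ stable, hence $A_\circ$ stable (since $r>1$), but only $\rho(\al_\circ)<r$; that does not give $\al_\circ$ stable for a fixed $r$. The paper needs two further steps to close this: first, the $r$-independence of $Q_r$ (proved in Proposition~\ref{P:OmrThm} via the finite-section/Riccati-difference argument), which upon letting $r\downarrow 1$ gives $\rho(\al_\circ)\le 1$, i.e.\ \emph{semi}-stability; second, the argument in the proof of Theorem~\ref{T:Main1} that $\Psi^{-1}=\Theta\,\Om^{-1}$ has no poles on $\BT$ (using the hypothesis that $\Om$ has no zeroes on $\BT$) and minimality of the realization of $\Psi^{-1}$, which rules out spectrum of $\al_\circ$ on $\BT$ and upgrades semi-stability to stability. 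Neither step appears in your proposal, and without them the equivalence is not established.

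\textbf{The inverse formula: a valid alternative route.} Your verification of $T_\Psi T_{\Psi^{-1}}=I$ by computing $(I-\BP)(\Psi^{-1}f)=-\de^{-1}\ga(zI-\al_\circ)^{-1}c_f$ (a strictly proper rational function with poles inside $\BD$), checking $T_{\Psi^{-1}}f\in\Dom(T_\Psi)$, and applying $\BP$, is a genuinely different and correct tactic from the paper's. The paper instead establishes $T_\Om=T_\Psi T_\Theta$ via Lemma~6.1 of \cite{GtHJR21} and then verifies \emph{both} $T_{\Psi^{-1}}T_\Psi=I$ and $T_\Psi T_{\Psi^{-1}}=I$ by block-matrix algebra, using the identity $\al-\al_\circ=\be_\circ\de^{-1}\ga$ (Lemma~\ref{L:matrep TOm} and the computation in \eqref{form1}). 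Your function-theoretic approach is arguably cleaner, but you should make explicit the step that converts a one-sided identity into the two-sided conclusion: once $T_\Om$ and $T_\Theta$ are known to be bijective, $T_\Om=T_\Psi T_\Theta$ forces $T_\Psi$ to be bijective, and then $T_\Psi T_{\Psi^{-1}}=I$ together with injectivity of $T_\Psi$ gives $T_{\Psi^{-1}}=T_\Psi^{-1}$. The final block-matrix formula \eqref{TOminvmat} follows exactly as you say.
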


For the final result we present in this introduction we restrict to the case where $p=2$, since it relies on a result from \cite{FKR10}, which is proved only for $p=2$.  Let $\Om\in\Rat^{m \times m}$ be given by the minimal realization with $A$ stable and $\al$ semi-stable and assume  $A$ and $\al$ are of size $s \times s$ and $t\times t$, respectively. We define the observability operator for the pair $(C,A)$ as
\begin{equation}\label{ObsCA}
\cO_{C,A}:\BC^s \to H^2_m,\quad \cO_{C,A}:x\mapsto C(I-zA)^{-1}x,\ \  \la\in\BD,
\end{equation}
and the controllability operator for the pair $(\al,\be)$ as
\begin{equation}\label{Conalbe}
\begin{aligned}
&\cC_{\al,\be}:\Dom(\cC_{\al,\be})\to \BC^t,\quad \cC_{\al,\be} f= \frac{1}{2\pi}\int_{-\pi}^{\pi} (e^{it}I-\al)^{-1}\be f(e^{it})\, \textup{d}t,\\
&\mbox{for}\quad f\in \Dom(\cC_{\al,\be}):=\left\{ f\in H^2_m \colon \int_{-\pi}^{\pi} (e^{it}I-\al)^{-1}\be f(e^{it})\, \textup{d}t\mbox{ exists} \right\}.
\end{aligned}
\end{equation}
Since $A$ is stable, it is clear that $\cO_{C,A}$ defines a bounded operator from $\BC^s$  into $H^2_m$. Due to the semi-stability of $\al$, $\cC_{\al,\be}$ need not be bounded, but it is the case that the subspace
\[
\cD_r:=\{f\in H^2_m \colon f_r\in H^2_m\},
\]
for $r>1$, is contained in $\Dom(\cC_{\al,\be})$. This will be proved in Lemma \ref{L:rtononr} in Section \ref{S:TOm invert} below, where we will also prove the following proposition.

\begin{proposition}\label{P:Qform}
Consider the case $p=2$. Let $\Om\in\Rat^{m \times m}$ with $\det \Om \not\equiv 0$ and such that $T_\Om$ is invertible. Assume that $\Om$ is given by the minimal realization \eqref{realOmega} with $A$ stable and $\al$ semi-stable. Then the solution $Q$ of the algebraic Riccati equation \eqref{Ricc} that makes $A_\circ$ and $\al_\circ$ in \eqref{Acirc-alphacirc} stable is given by
\begin{equation}\label{Qform}
Q=\cC_{\al,\be} T_\Om^{-1} \cO_{C,A}.
\end{equation}
\end{proposition}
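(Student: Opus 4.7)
My plan is to derive \eqref{Qform} from its bounded counterpart in \cite{FKR10} by approximating $\Om$ with $\Om_r(z)=\Om(rz)$ and passing to the limit $r\to 1^+$. For $1<r<r_0$ the function $\Om_r$ admits the minimal realization
\[
\Om_r(z)=R_0+z(rC)(I-z(rA))^{-1}B+(r^{-1}\ga)(zI-r^{-1}\al)^{-1}\be,
\]
in which both $rA$ and $r^{-1}\al$ are stable (the latter strictly, as $r>1$ pushes the spectrum of $\al$ off $\BT$ into $\BD$). By Theorem \ref{T:Main2}, $T_{\Om_r}$ is bounded and invertible, so the bounded-case result of \cite{FKR10} yields the unique stabilizing Riccati solution for $\Om_r$ in the form $Q_r=\cC_{r^{-1}\al,\be}T_{\Om_r}^{-1}\cO_{rC,rA}$.

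Next I would verify that $Q_r\to Q$ as $r\to 1^+$. By Theorem \ref{T:Main1}, $Q_r$ is equivalent to the pseudo-canonical factorization of $\Om_r$, whose data vary smoothly in $r$; combined with uniqueness (hence isolation) of the stabilizing solution, an implicit function argument gives the convergence. It therefore suffices to show
\[
\cC_{r^{-1}\al,\be}\,T_{\Om_r}^{-1}\,\cO_{rC,rA}\longrightarrow \cC_{\al,\be}\,T_\Om^{-1}\,\cO_{C,A}\qquad \text{as }r\to 1^+.
\]

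The operator $\cO_{rC,rA}$ converges in norm to $\cO_{C,A}$, and since $A$ is stable, the function $\cO_{C,A}x=C(I-zA)^{-1}x$ extends analytically past $\BT$, placing it in $\cD_{\rho_1}$ for some fixed $\rho_1>1$. Using Theorem \ref{T:Main3} and the pseudo-canonical factorizations of $\Om$ and $\Om_r$, one can write $T_{\Om_r}^{-1}\cO_{rC,rA}x=T_{\Theta_r^{-1}}T_{\Psi_r^{-1}}\cO_{rC,rA}x$ and show that these functions lie in a fixed space $\cD_\rho$ with $\rho>1$ and converge there to $T_\Om^{-1}\cO_{C,A}x$. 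Since $\cD_\rho\subset\Dom(\cC_{\al,\be})$ by Lemma \ref{L:rtononr}, dominated convergence applied to the integral in \eqref{Conalbe} then delivers the desired limit.

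The main obstacle will be the uniform-in-$r$ control of $T_{\Om_r}^{-1}\cO_{rC,rA}x$ in a fixed $\cD_\rho$ with $\rho>1$, since the limit operator $\cC_{\al,\be}$ is only densely defined while each $\cC_{r^{-1}\al,\be}$ is bounded on $H^2_m$ (with norm blowing up as $r\to 1^+$). The required bounds rest on the explicit state-space formulas \eqref{ThetaPsiInvReal}: stability of $A_\circ$ and $\al_\circ$, together with their continuous dependence on $r$, furnishes the analytic continuation of $T_\Om^{-1}\cO_{C,A}x$ into a fixed annular neighborhood of $\BT$ needed to dominate the integrand uniformly in $r$.
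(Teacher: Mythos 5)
Your plan takes a limiting route ($r\to 1^+$) that misses the key structural fact the paper exploits: there is no limit to take. Proposition \ref{P:OmrThm} already establishes that the stabilizing solution $Q_r$ for $\Om_r$ is \emph{exactly} independent of $r$ for all $1<r<r_0$, so $Q_r=Q$ identically, not just asymptotically; and Lemma \ref{L:rtononr} supplies the exact conjugation identities
\[
r\,\cC_{\al_r,\be_r}=\cC_{\al,\be}\Up^{-1},\qquad \cO_{C_r,A_r}=r\,\Up\,\cO_{C,A},\qquad T_{\Om_r}^{-1}=\Up\,T_{\Om}^{-1}\,\Up^{-1},
\]
from which $\cC_{\al_r,\be_r}T_{\Om_r}^{-1}\cO_{C_r,A_r}=\cC_{\al,\be}T_{\Om}^{-1}\cO_{C,A}$ follows by a one-line cancellation of the $\Up$'s and the factors of $r$. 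Thus the expression you want to compute the limit of is \emph{already} equal to the right-hand side of \eqref{Qform} for every fixed $r$, and the entire apparatus you propose --- the implicit-function argument for $Q_r\to Q$, the uniform-in-$r$ control of $T_{\Om_r}^{-1}\cO_{C_r,A_r}x$ in a fixed $\cD_\rho$, and the dominated-convergence step for $\cC_{\al,\be}$ --- becomes unnecessary.

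Two further concrete problems with the proposal as stated. First, the controllability operator from \cite{FKR10} applied to the realization \eqref{realOmr}--\eqref{OmrRealMatrices} of $\Om_r$ is $\cC_{\al_r,\be_r}=\cC_{r^{-1}\al,\,r^{-1}\be}$, not $\cC_{r^{-1}\al,\,\be}$ as you wrote; the missing $r^{-1}$ on $\be$ is precisely one of the factors that cancels in the exact identity. Second, the step ``an implicit function argument gives the convergence $Q_r\to Q$'' is not justified: the stabilizing condition changes from ``$r^{-1}\al_\circ$ stable'' to ``$\al_\circ$ stable'' as $r\to 1^+$, and since $\al$ has eigenvalues on $\BT$ this degeneration is exactly the delicate point; the paper instead deduces stability of $\al_\circ$ at $r=1$ by a separate argument using minimality of the realization of $\Psi^{-1}$ and the absence of zeroes of $\Om$ on $\BT$ (in the proof of Theorem \ref{T:Main1}). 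So even if you wanted to proceed by limits, the convergence $Q_r\to Q$ would require a genuinely different justification than the one you sketch. The lesson is that once the exact identities of Lemma \ref{L:rtononr} and the $r$-independence of $Q_r$ are in hand, the proof of \eqref{Qform} is immediate and limit-free.
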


The formula for $Q$ is analogous to that in \cite{FKR10}, where poles on $\BT$ are not allowed, but requires more attention since $\cC_{\al,\be}$ is not necessarily bounded. To see that the right hand side in \eqref{Qform} is well defined, we point out that $\cO_{C,A}$ maps $\BC^s$ into $\cD_r$ for $r>1$ small enough, while $T_\Om^{-1}$ maps $\cD_r$  into $\cD_r$, again for $r>1$ small enough, which is contained in the domain of $\cC_{\al,\be}$. That $T_\Om^{-1}$ maps $\cD_r$  into $\cD_r$ follows from Proposition \ref{P:TomVsTomr} below.

We conclude this introduction with a brief overview of the remainder of the paper. In Section \ref{S:FormsForOmr} we apply the main result of \cite{FKR10} to the function $\Om_r$ in \eqref{Omr}, and translate back to the state space realization of $\Om$, leading to a proof of Theorem \ref{T:Main1}. In the next section we investigate the relation between $T_\Om$ and $T_{\Om_r}$, and give a proof of Theorem \ref{T:Main2}. The work of Sections \ref{S:FormsForOmr} and \ref{S:FredholmCompare}, is then combined in Section \ref{S:TOm invert} to prove Theorem \ref{T:Main3} as well as Proposition \ref{P:Qform}.

\section{Riccati equation, canonical factorization and 
inversion for $\Om_r$}\label{S:FormsForOmr}

Suppose that $\Om\in\Rat^{m \times m}$ is given by the minimal realization formula \eqref{realOmega}, that is:
\begin{equation}\label{RealOm}
\Om(z)=R_0+zC(I-zA)^{-1}B+\gamma(z I-\alpha)^{-1}\beta,
\end{equation}
with $A$ being stable and $\al$ being semistable. It is then clear that $\Om_r$ defined by \eqref{Omr} admits the state space realization
\begin{align}
\Om_r(z)=\Om(rz) &=R_0+z(rC)(I-z(rA))^{-1}B+\gamma\left(z I-\frac{\alpha}{r}\right)^{-1}\frac{\beta}{r} \notag\\
&=R_0+zC_r(I-zA_r)^{-1}B+\gamma(z I-\al_r)^{-1}\be_r \label{realOmr}
\end{align}
with
\begin{equation}\label{OmrRealMatrices}
A_r=rA,\quad C_r=rC,\quad \al_r= \frac{\alpha}{r},\quad \be_r =\frac{\beta}{r}.
\end{equation}
As in Theorem \ref{T:Main1}, let $r_0>1$ be such that $\Om$ has no zeroes in the annulus $\fA_{r_0}$ and no poles in $\fA_{r_0}\setminus\BT$, with $\fA_{r_0}$ as defined in \eqref{annulus}. Then, for $1<r<r_0$, $\Om_r$ has no poles and no zeroes on $\BT$. Therefore, the results of \cite{FKR10} apply to $\Om_r$ and its realization \eqref{realOmr}--\eqref{OmrRealMatrices}. Note that the paper \cite{FKR10} only considers the case of Toeplitz operators on $H^2_m$. However, since invertibility of Toeplitz operators on $H^p_m$ with rational matrix symbols can be characterized in terms of their Wiener-Hopf factorizations, which are independent of $p$, invertibility on $H^p_m$ is independent of the value of $p$. We now specify the main result of \cite{FKR10} to $\Om_r$, together with some supplementary observations, in the next proposition. This result is subsequently used to prove Theorem \ref{T:Main1}.

\begin{proposition}\label{P:OmrThm}
Let $\Om\in\Rat^{m \times m}$ be given by the realization \eqref{RealOm} with $A$ stable and $\al$ semi-stable, so that $\Om_r$ is given by the realization \eqref{realOmr}--\eqref{OmrRealMatrices}.  Let $r_0>1$ be such that $\Om$ has no zeroes in the annulus $\fA_{r_0}$ and no poles in $\fA_{r_0}\setminus\BT$. For $1<r<r_0$ the following are equivalent:
\begin{itemize}
\item[(i)] $T_{\Omega_r}$ is invertible.

\item[(ii)] There exists a matrix $Q$ such that $R_0-\gamma QB$ is invertible, $Q$ satisfies the  nonsymmetric discrete algebraic Riccati equation:
\begin{equation}\label{RiccOmr}
Q=\alpha QA+\left(\beta-\alpha QB\right)(R_0-\gamma QB)^{-1} (C-\gamma QA),
\end{equation}
and $r A_\circ$ and $r^{-1} \al_\circ$ are stable, with $A_\circ$ and $\al_\circ$ given by \eqref{Acirc-alphacirc}.

\item[(iii)]
$\Omega_r$ has a canonical factorization $\Omega_r(z)=\Psi^{(r)}(z)\Theta^{(r)}(z)$, i.e., $\Psi^{(r)}$ and $\Theta^{(r)}$ are $m \times m$ rational matrix functions with $\det\Psi^{(r)}\not\equiv 0$ and $\det\Theta^{(r)}\not\equiv 0$ and such that $\Theta^{(r)}$ and $(\Theta^{(r)})^{-1}$ have poles only outside $\BT$ and $\Psi^{(r)}$ and $(\Psi^{(r)})^{-1}$ have poles only inside $\BT$.

\end{itemize}
Moreover, the solution $Q$ of the Riccati equation \eqref{RiccOmr} such that $r A_\circ$ and $r^{-1} \al_\circ$ are stable is unique and independent of $r$, i.e., for each $1<r<r_0$ one obtains the same solution $Q$ in item \textup{(ii)}. Furthermore, if $Q$ is as in (ii), then a canonical factorization of $\Om_r$ is obtained with $\Theta^{(r)}=\Theta_r$ and $\Psi^{(r)}=\Psi_r$, where $\Theta$ and $\Psi$ are defined as in Theorem \ref{T:Main1}, and $\Theta_r$ and $\Psi_r$ are defined according to \eqref{Omr}. In case one of items \textup{(i)}--\textup{(iii)} holds, and hence all, we have $T_{\Om_r}^{-1}=T_{\Theta_r}^{-1}T_{\Psi_r}^{-1}=T_{\Theta_r^{-1}}T_{\Psi_r^{-1}}$.
\end{proposition}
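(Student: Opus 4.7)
The strategy is to apply Theorem 1.1 of \cite{FKR10} directly to $\Om_r$ equipped with the realization \eqref{realOmr}--\eqref{OmrRealMatrices} and then translate every object produced---the Riccati equation, the closed-loop matrices, the canonical factors, and the Toeplitz inverse formula---back to the original data $(A,B,C,\al,\be,\ga,R_0)$ via the scaling identities $A_r=rA$, $C_r=rC$, $\al_r=\al/r$, $\be_r=\be/r$. The first step is to check that the scaled realization satisfies the hypotheses of \cite{FKR10}: minimality, stability of $A_r$ and $\al_r$, and absence of poles and zeroes of $\Om_r$ on $\BT$. Minimality passes through the scaling since the controllability and observability matrices only acquire invertible diagonal factors. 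The absence of poles of $\Om$ in $\fA_{r_0}\setminus\BT$ pins the eigenvalues of $A$ to modulus at most $1/r_0$ and those of $\al$ to modulus at most $1/r_0$ or equal to $1$, so for $1<r<r_0$ both $rA$ and $\al/r$ are stable; the no-zero condition in $\fA_{r_0}$ likewise rules out zeroes of $\Om_r$ on $\BT$.

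Once the hypotheses are verified, applying \cite{FKR10} to $\Om_r$ gives the equivalence \emph{(i)}$\Leftrightarrow$\emph{(ii)}$\Leftrightarrow$\emph{(iii)} in a form that initially refers to the scaled data. A direct substitution using \eqref{OmrRealMatrices} shows that every factor of $r$ cancels in the FKR10 Riccati equation, leaving the $r$-independent equation \eqref{RiccOmr} in $Q$, while the closed-loop matrices produced by \cite{FKR10} reduce to $rA_\circ$ and $r^{-1}\al_\circ$, with $A_\circ,\al_\circ$ as in \eqref{Acirc-alphacirc}. The same substitution applied to the factorization formulas of \cite{FKR10} yields $C_\circ^{(r)}=rC_\circ$ and $\be_\circ^{(r)}=r^{-1}\be_\circ$, whence $\Theta^{(r)}(z)=\Theta(rz)=\Theta_r(z)$ and $\Psi^{(r)}(z)=\Psi(rz)=\Psi_r(z)$. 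The inverse formula $T_{\Om_r}^{-1}=T_{\Theta_r}^{-1}T_{\Psi_r}^{-1}=T_{\Theta_r^{-1}}T_{\Psi_r^{-1}}$ then follows from the standard identity $T_{fg}=T_fT_g$ whenever one factor is analytic inside $\BT$ and the other outside, applied to the canonical factorization of $\Om_r$.

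The main obstacle is the uniqueness and $r$-independence of $Q$. Uniqueness at each fixed $r$ is inherited from the corresponding assertion in \cite{FKR10}. For the $r$-independence, the idea is that any $Q$ admissible in \emph{(ii)} for some single $r\in(1,r_0)$ is in fact admissible for every $r\in(1,r_0)$. Indeed, by the minimality of the \cite{FKR10} realizations of $\Theta^{(r)-1}$ and $\Psi^{(r)-1}$, the eigenvalues of $A_\circ^{(r)}=rA_\circ$ and $\al_\circ^{(r)}=r^{-1}\al_\circ$ correspond, through the standard pole-zero correspondence for state-space realizations, to zeroes of $\Om_r$; rescaling via $w=rz$ identifies these with zeroes of $\Om$. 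Since $\Om$ has no zeroes on $\BT$ nor in $\fA_{r_0}\setminus\BT$, and since $A_\circ^{(r)}$ and $\al_\circ^{(r)}$ are already stable, the eigenvalues of $A_\circ$ and of $\al_\circ$ must lie in the closed disc $(1/r_0)\ov{\BD}$; hence $rA_\circ$ and $r^{-1}\al_\circ$ are stable uniformly in $r\in(1,r_0)$, and the same $Q$ serves every such $r$.
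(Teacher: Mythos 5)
Your proposal is correct and follows the same overall scheme as the paper's proof: apply Theorem 1.1 of \cite{FKR10} to $\Om_r$ with the scaled realization, verify its hypotheses, and translate the Riccati equation, closed-loop matrices, canonical factors and inverse formula back to the original data via $A_r=rA$, $C_r=rC$, $\al_r=\al/r$, $\be_r=\be/r$, observing the cancellation of $r$-factors. The genuine point of departure is your treatment of the $r$-independence of the stabilizing solution $Q$, and there your route is quite different from the paper's. The paper invokes the finite-section/Riccati-difference-equation machinery of Section 4 of \cite{FKR10}: writing $Q_r=\lim_{N}Q_{N,r}$ with $Q_{N,r}=\cC_{\be_r,\al_r,N}\,T_{\Om_r,N}^{-1}\,\cO_{C_r,A_r,N}$, it shows that $T_{\Om_r,N}$ is a diagonal similarity transform of $T_{\Om,N}$ by $\diag(I,rI,\ldots,r^{N-1}I)$ and that the $r$-factors therefore cancel in $Q_{N,r}$, so $Q_{N,r}$ and hence $Q_r$ is $r$-free. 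You instead argue spectrally: using minimality of the inverse-factor realizations and the pole/zero correspondence under the change of variable $w=rz$, the eigenvalues of $A_\circ$ and $\al_\circ$ map to zeroes of $\Om$, which by hypothesis avoid $\fA_{r_0}$; combined with the stability of $rA_\circ$ and $r^{-1}\al_\circ$ for the chosen $r$, this confines the spectra of $A_\circ$ and $\al_\circ$ to $(1/r_0)\ov{\BD}$, so the same $Q$ is stabilizing for every $r'\in(1,r_0)$, and uniqueness at each $r'$ then forces all the $Q_{r'}$ to coincide. Your approach is shorter and purely spectral, but it leans on minimality of the realizations of $(\Theta^{(r)})^{-1}$ and $(\Psi^{(r)})^{-1}$, which you should either cite explicitly from \cite{FKR10} or justify by the McMillan-degree count that the paper carries out only later, in the proof of Theorem \ref{T:Main1}. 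The paper's finite-section argument is more computational but is self-contained at that point in the exposition and, as a by-product, anticipates the formula $Q=\cC_{\al,\be}T_\Om^{-1}\cO_{C,A}$ used in Proposition \ref{P:Qform}.
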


\begin{proof}[\bf Proof]
Since $\Om_r$ has no poles on $\BT$, since $1<r<r_0$, Theorem 1.1 of \cite{FKR10} applies to $\Om_r$ and its realization \eqref{realOmr}, leading to the equivalence of variations of (i)--(iii) in terms of the matrices in the realizations. Technically, Theorem 1.1 of \cite{FKR10} does not contain an item about the invertibility of $T_{\Om_r}$, but that invertibility of $T_{\Om_r}$ is equivalent to the two items in the theorem follows by the discussion preceding the theorem, and this is also where the formula for $T_{\Om_r}^{-1}$ in terms of the canonical factors appears. It thus remains to show that the statements of items (ii) and (iii) in terms of the realization matrices of $\Om_r$ correspond to the statements concerning the Riccati solutions and canonical factorization from Theorem 1.1 of \cite{FKR10}, respectively.

We start with item (ii). From Theorem 1.1 of \cite{FKR10}, and the preceding paragraphs, we obtain that invertibility of $T_{\Om_r}$ is equivalent to the existence  of a matrix $Q_r$ such that $R_0-\gamma Q_rB$ is invertible, that satisfies the Riccati equation
\begin{align*}
Q_r&=\alpha_r Q_r A_r+\left(\beta_r-\alpha_r Q_r B\right)(R_0-\gamma Q_r B)^{-1} (C_r-\gamma Q_rA_r)\\
&=\left(\frac{\alpha}{r}\right)Q_r(rA)+\left(\frac{\beta}{r}-\frac{\alpha}{r}Q_rB\right)(R_0-\gamma Q_rB)^{-1} (rC-\gamma Q_r(rA))\\
&=\alpha Q_rA+\left(\beta-\alpha Q_rB\right)(R_0-\gamma Q_rB)^{-1} (C-\gamma Q_rA)
\end{align*}
and such that
\begin{align*}
A_{\circ,r}& = A_r-B(R_0-\gamma Q_rB)^{-1}(C_r-\gamma Q_rA_r)\\
&=rA-B(R_0-\gamma Q_rB)^{-1}(rC-\gamma Q_r(rA))=r A_\circ, \\
\alpha_{\circ,r}&=\alpha_r-\left(\beta_r-\alpha_r Q_rB\right)(R_0-\gamma Q_rB)^{-1}\gamma\\
&=\frac{\alpha}{r}-\left(\frac{\beta}{r}-\frac{\alpha}{r}Q_rB\right)(R_0-\gamma Q_rB)^{-1} \gamma
=\frac{\alpha_\circ}{r}
\end{align*}
are both stable, corresponding to the claim of item (ii). Moreover, the matrix $Q_r$ with these properties is unique.
 It follows that the Riccati equation that $Q_r$ solved is independent of $r$, but it is less straightforward that the condition of having $r A_\circ$ and $\frac{\alpha_\circ}{r}$ stable does not introduce a dependency on $r$; in particular, $A_\circ$ and $\alpha_\circ$ in the above formulas may depend on $r$. To see that this is not the case, we note that the matrix $Q_r$ can be obtained as the limit of a Riccati difference equation associated with the finite section method for $T_{\Om_r}$, as discussed in Section 4 of \cite{FKR10}. Indeed, since $\Om_r$ is continuous on $\BT$ and we assume $T_{\Om_r}$ to be invertible, for $N$ large enough the $N$-th section of $T_{\Om,r}$, i.e., the Toeplitz block matrix
\begin{equation}\label{ToepMatNr}
T_{\Om_r,N}=\mat{R_{0,r}&R_{-1,r}&\cdots&R_{1-N,r}\\ R_{1,r}&R_{0,r}&\cdots&R_{2-N,r}\\ \vdots&\vdots&\ddots&\vdots\\ R_{N-1,r}&R_{N-2,r}&\cdots&R_{0,r}},
\end{equation}
with $R_{j,r}$ the $j$-th Fourier coefficient of $\Om_r$, will be invertible, and the matrices
\[
Q_{N,r}:= \cC_{\be_r,\al_r,N} T_{\Om_r,N}^{-1} \cO_{C_r,A_r,N}
\]
with $\cC_{\be_r,\al_r,N}$ and $\cO_{C_r,A_r,N}$ given by
\begin{equation}\label{ObsConrN}
\cC_{\be_r,\al_r,N}=\Row_{j=0}^{N-1}(\al_r^{j}\be_r)
\ands \cO_{C_r,A_r,N}=\Col_{j=0}^{N-1}(C_rA_r^{j})
\end{equation}
solve the Riccati difference equation
\begin{align*}
Q_{N+1,r}&= \alpha_r Q_{N,r} A_r + (\beta_r-\alpha_r Q_{N,r}B)(R_0-\gamma Q_{N,r}B)^{-1}(C_r-\gamma Q_{N,r} A_r)\\
   &= \alpha Q_{N,r} A + (\beta-\alpha Q_{N,r}B)(R_0-\gamma Q_{N,r}B)^{-1}(C-\gamma Q_{N,r} A)
\end{align*}
and $Q_{N,r}$ converges to $Q_r$ as $N\to \infty$. Hence, in order to see that $Q_r$ is independent of $r$, it suffices to show that $Q_{N,r}$ is independent of $r$. Note that the Fourier coefficients of $\Om_r$ are given by
\begin{equation}\label{RNr}
R_{n,r}=\left\{ \begin{array}{l}
C_rA_r^{n-1}B=r^n CA^{n-1}B\mbox{ for $n>0$},\\
R_0\mbox{ for $n=0$},\\
\gamma \alpha_r^{n-1}\beta_r=r^{-n} \gamma \alpha^{n-1}\beta\mbox{ for $n<0$}.
\end{array}\right.
\end{equation}
It follows that
\[
T_{\Om_r,N}=\diag(I_m,r I_m,\ldots,r^{N-1}I_m)T_{\Om,N} \diag(I_m,rI_m,\ldots,r^{N-1}I_m)^{-1},
\]
where $T_{\Om,N}$ is as in \eqref{ToepMatNr} with $R_{n,r}=R_{n,1}$, where $R_{n,1}$ is defined according to \eqref{RNr} with $r=1$. This shows that for large $N$, also $T_{\Om,N}$ invertible and
\begin{equation}\label{TOmrNinv}
T_{\Om_r,N}^{-1}=\diag(I_m,r I_m,\ldots,r^{N-1}I_m)T_{\Om,N}^{-1} \diag(I_m,rI_m,\ldots,r^{N-1}I_m)^{-1}.
\end{equation}
Define $\cC_{\be,\al,N}$ and $\cO_{C,A,N}$ analogous to $\cC_{\be_r,\al_r,N}$ and $\cO_{C_r,A_r,N}$, with $\be_r,\al_r,C_r,A_r$ replaced by $\be,\al,C,A$, respectively. It is then easy to see that
\begin{align*}
\cC_{\be_r,\al_r,N}&=\cC_{\be,\al,N}\diag(I_m,r I_m,\ldots,r^{N-1}I_m)^{-1},\\
\cO_{C_r,A_r,N}&= \diag(I_m,r I_m,\ldots,r^{N-1}I_m)\cO_{C,A,N}.
\end{align*}
Combining these identities with \eqref{TOmrNinv}, it follows that
\[
Q_{N,r}=\cC_{\be_r,\al_r,N} T_{\Om_r,N}^{-1} \cO_{C_r,A_r,N}=\cC_{\be,\al,N} T_{\Om,N}^{-1} \cO_{C,A,N},
\]
is indeed independent of $r$, and consequently, $Q_r$ is also independent of $r$.

It remains to prove the equivalence of (ii) (or (i)) and (iii),and that (iii) can be achieved as described in the proposition, and the formulas for $T_{\Om_r}^{-1}$. The equivalence of (ii) and (iii), in fact, follows directly from Theorem 1.1 in \cite{FKR10}. We now show that the formulas for the canonical factors from \cite{FKR10} lead to the factorization of $\Om_r$ using $\Theta$ and $\Psi$ from Theorem \ref{T:Main1}.  By the formulas in \cite{FKR10}, the canonical factorization of $\Om_r$ is given by $\Om_r(z)=\Psi^{(r)}(z)\Theta^{(r)}(z)$ where we factor $R_0-\ga Q_rB=\de D$, as claimed, and set
\begin{align*}
\be_{\circ,r}&=(\be_r-\al_r Q_rB)D^{-1}=r^{-1}(\be-\al Q_rB)D^{-1}=r^{-1}\be_\circ,\\
C_{\circ,r} &= \de^{-1}(C_r-\ga Q A_r)=r \de^{-1}(C-\ga Q A)=r C_\circ,
\end{align*}
to arrive at
\begin{align*}
\Psi^{(r)}(z) &=\de+\ga(zI-\al_r)^{-1}\be_{\circ,r} = \de+r^{-1}\ga(zI-r^{-1}\al)^{-1}\be_{\circ}\\
&=\de+\ga(rzI-\al)^{-1}\be_{\circ}=\Psi_r(z)
\end{align*}
with inverse
\begin{align*}
\Psi^{(r)}(z)^{-1} &=\de^{-1}-\de^{-1}\ga(zI-\al_{\circ,r})^{-1}\be_{\circ,r}\de^{-1}\\ &=\de^{-1}-r^{-1}\de^{-1}\ga(zI-r^{-1}\al_{\circ})^{-1}\be_{\circ}\de^{-1} =\de^{-1}-\de^{-1}\ga(rzI-\al_{\circ})^{-1}\be_{\circ}\de^{-1}
\end{align*}
and
\begin{align*}
\Theta^{(r)}(z) &=D+zC_{\circ,r}(I-zA_r)^{-1}B = D+rzC_{\circ}(I-rzA)^{-1}B=\Theta_r(z)
\end{align*}
with inverse
\begin{align*}
\Theta^{(r)}(z)^{-1} &=D^{-1}-zD^{-1}C_{\circ,r}(I-zA_{\circ,r})^{-1}BD^{-1}\\
&= D^{-1}-rzD^{-1}C_{\circ}(I-rzA_\circ)^{-1}BD^{-1}.
\end{align*}
The formula for $T_{\Om_r}^{-1}$ now follows simply from the text preceding Theorem 1.1 in \cite{FKR10}.
\end{proof}

Using the equivalence of (ii) and (iii) in Proposition \ref{P:OmrThm} it is easy to prove our second main result.

\begin{proof}[\bf Proof of Theorem \ref{T:Main1}]
Since $\Om_r$, $\Psi_r$ and $\Theta_r$ are rational matrix functions, the factorization $\Om_r(z)=\Psi_r(z)\Theta_r(z)$ for some $1<r<r_0$ implies that also $\Om(z)=\Psi(z)\Theta(z)$ as well as the formulas for the inverses of $\Psi$ and $\Theta$. Hence (iii) in Proposition \ref{P:OmrThm} is equivalent to (ii) in Theorem \ref{T:Main1}.

Next we show that the realizations of $\Theta$ and $\Psi$ in \eqref{ThetaPsi} and of $\Theta^{-1}$ and $\Psi^{-1}$ in \eqref{ThetaPsiInvReal} are minimal. Note that since the realization of $\Om$ is minimal and $A$ is stable and $\al$ is semi-stable, the McMillan degree, $\degr(\Om)$, of $\Om$ is equal to the sum of the sizes of $A$ and $\al$, say $s$ and $t$, respectively. From the formulas of $\Theta$ and $\Psi$ it is clear that $\degr(\Theta)\leq s$ and $\degr(\Psi)\leq t$. On the other hand, since the McMillan degree is sublogarithmic, we have
\[
s+t=\degr(\Om)=\degr(\Psi \Theta)\leq \degr(\Psi) + \degr(\Theta)\leq s+t.
\]
Hence we have equality in each step, which implies $\degr(\Theta)=s$ and $\degr(\Psi)=t$, in other words, the realizations of $\Theta$ and $\Psi$ are minimal. By the observation right after Proposition 7.2 in \cite{BGKR08}, it follows that the realization for $\Theta$ (respectively $\Psi$) is minimal if and only if the realization of $\Theta^{-1}$ (respectively $\Psi^{-1}$) is minimal. Hence, also the realizations of $\Theta^{-1}$ and $\Psi^{-1}$ are minimal.

Since the solution $Q=Q_r$ of \eqref{RiccOmr} used to construct $r A_\circ$ and $r^{-1}\al_\circ$ is independent of $r$, it follows that the solution $Q$ in item (ii) in Proposition \ref{P:OmrThm} is such that $r A_\circ$ and $r^{-1}\al_\circ$ are stable for all $1<r<r_0$, so that $A_\circ$ is stable and $\al_\circ$ is semi-stable.

Thus, from the equivalence of (ii) and (iii) in Proposition \ref{P:OmrThm} it follows that we get the equivalence of (i) and (ii) in Theorem \ref{T:Main1}, except that at this stage we only get $\al_\circ$ to be semi-stable. To see that $\al_\circ$ is in fact stable, note that $\Om(z)=\Psi(z)\Theta(z)$ implies that $\Psi(z)^{-1}=\Theta(z)\Om(z)^{-1}$. Since $A$ is stable, $\Theta$ has no poles on $\BT$, and $\Om^{-1}$ has no poles on $\BT$ because $\Om$ is assumed to have no zeroes on $\BT$. Therefore, $\Psi^{-1}$ has no poles on $\BT$ either, which implies, by minimality of the realization of $\Psi^{-1}$, that $\al_\circ$ has no eigenvalues on $\BT$. Hence $\al_\circ$ is stable.
\end{proof}

\section{Fredholmness of $T_\Om$ versus Fredholmness of $T_{\Om_r}$}\label{S:FredholmCompare}

In this section we prove Theorem \ref{T:Main2}. The proof relies heavily on the connection between $T_\Om$ and $T_{\Om_r}$, with $\Om_r$ as in \eqref{Omr}, as explained in the next result.

\begin{proposition}\label{P:TomVsTomr}
Let $\Om\in\Rat^{m \times m}$ with $\det \Om \not\equiv 0$ and assume that $\Om$ has no zeroes on $\BT$.  Let $r_0>1$ be such that $\Om$ has no zeroes in the annulus $\fA_{r_0}$ and no poles in $\fA_{r_0}\setminus\BT$. Define
\begin{equation}\label{cD_r}
\cD_r:=\{f\in H^p_m \colon f_r\in H^p_m\}.
\end{equation}
Then for each $1<r<r_0$ we have
\begin{equation}\label{incl}
\kernel T_\Om \subset \cD_r \subset H_m(\overline{\BD}) \subset \Dom (T_\Om).
\end{equation}
Moreover, $T_\Om$ maps $\cD_r$ into $\cD_r$, the inverse image $T_\Om^{-1}(\cD_r)$ of $\cD_r$ under $T_\Om$ lies in $\cD_r$, and we have
\begin{equation}\label{TomVsTomr}
(T_\Om f)_r=T_{\Om_r}f_r,\quad f\in\cD_r.
\end{equation}
\end{proposition}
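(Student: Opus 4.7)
The proposition makes four assertions---the chain of inclusions \eqref{incl}, the invariance $T_\Om(\cD_r)\subset\cD_r$, the inverse-image inclusion $T_\Om^{-1}(\cD_r)\subset\cD_r$, and the intertwining identity \eqref{TomVsTomr}---and the plan is to treat them in that order. The two easy inclusions in \eqref{incl} I will dispense with directly: $f_r\in H^p_m$ forces $f$ to be analytic on the disc of radius $r>1$, so $\cD_r\subset H_m(\overline{\BD})$; and for $f\in H_m(\overline{\BD})$ I will decompose $\Om f$ near $\BT$ by extracting the principal parts of $\Om f$ at the poles of $\Om$ on $\BT$ into an element $\eta\in\Rat_0^m(\BT)$, so that $h:=\Om f-\eta$ is holomorphic in a neighbourhood of $\BT$, hence in $L^p_m(\BT)$, placing $f$ in $\Dom T_\Om$.

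The harder inclusion $\kernel T_\Om\subset\cD_r$ I will prove by analytic continuation across $\BT$. For $f\in\kernel T_\Om$ one has $\Om f=h+\eta$ with $h\in K^p_m$ and $\eta\in\Rat_0^m(\BT)$; letting $\hat h$ denote the analytic extension of $h$ to $\{|z|>1\}$ vanishing at $\infty$, the function $\tilde f(z):=\Om(z)^{-1}(\hat h(z)+\eta(z))$ is analytic on $\{1<|z|<r_0\}$, because $\Om^{-1}$ has no poles in $\fA_{r_0}$ and both $\hat h$ and $\eta$ are analytic off $\BT$. Its boundary values from outside coincide with those of $f$ from inside (both equal $\Om^{-1}(h+\eta)$ in $L^p$), so $f$ and $\tilde f$ glue along $\BT$ into an analytic function on $\{|z|<r_0\}$, which gives $f\in\cD_r$ for every $1<r<r_0$.

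The same template will drive the invariance and inverse-image statements. For $f\in\cD_r$ I split $\Om f=h+\eta$ and note that, since $f$ is analytic on $r\BD$, $\Om f$ is meromorphic on $\fA_{r_0}$ with singularities only at poles of $\Om$ on $\BT$; hence $h=\Om f-\eta$ is analytic on $\fA_{r_0}$, and its Laurent expansion around $0$ shows that $T_\Om f=\BP h$ is its holomorphic part, which extends analytically to $\{|z|<r_0\}$. Conversely, if $g:=T_\Om f\in\cD_r$, then $k:=h-g\in K^p_m$, and piecing together the extensions of $g$ to $r\BD$, of $k$ to $\{|z|>1\}$, and of $\eta$ as a rational function, I obtain $\Om^{-1}(g+k+\eta)$ as the analytic continuation of $f$ across $\BT$. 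For the intertwining identity I apply $(\cdot)_r$ to $\Om f=h+\eta$, giving $\Om_r f_r=h_r+\eta_r$; the key observation is that $\eta_r$ is strictly proper rational with poles on the circle $\{|z|=r^{-1}\}\subset\BD$, so an explicit Laurent expansion around $0$ shows that all its non-negative Fourier coefficients vanish, whence $\BP\eta_r=0$. Combined with the termwise identity $(\BP h)_r=\BP h_r$, this yields $(T_\Om f)_r=(\BP h)_r=\BP h_r=\BP(h_r+\eta_r)=\BP(\Om_r f_r)=T_{\Om_r}f_r$.

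The step I expect to be the main obstacle is the gluing in the kernel and inverse-image inclusions: one must verify that at each pole $z_0\in\BT$ of $\Om$ the principal part of $\hat h+\eta$ (respectively of $g+k+\eta$) is exactly cancelled by the vanishing of $\Om^{-1}$ at $z_0$, so that the meromorphic function produced by pasting is in fact analytic on all of $\{|z|<r_0\}$. This cancellation is built into the definition of $\eta$ via the decomposition of $\Om f$, but in the matrix setting it will require some local book-keeping, for instance using local Smith-type factorisations of $\Om$ at its poles on $\BT$ to track orders of zeros and poles.
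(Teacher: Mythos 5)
Your argument for the two easy inclusions, for the forward invariance $T_\Om(\cD_r)\subset\cD_r$, and for the intertwining identity \eqref{TomVsTomr} is essentially the same as the paper's: you note that $h:=\Om f-\eta$ is meromorphic on $\fA_{r_0}$, cannot have poles on $\BT$ because it is in $L^p_m(\BT)$, hence is analytic on the annulus, which justifies the splitting $h=h_++h_-$ and the identity $(\BP h)_r=\BP h_r$; and you correctly observe that $\eta_r$ lands in $K^p_m$ because its poles move into $\BD$.

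The genuinely different, and incompletely justified, part of your proposal is the treatment of $\kernel T_\Om\subset\cD_r$ and $T_\Om^{-1}(\cD_r)\subset\cD_r$ by direct analytic continuation across $\BT$. You yourself flag the gluing as ``the main obstacle'' and then defer it to ``local Smith-type factorisations,'' but you never carry that step out, so as written this is a plan with a hole rather than a proof. Two things are actually needed, and the second is where the work is. First, one must show that the rational function $\Om^{-1}\eta$ has no poles on $\BT$; this is cleaner than a Smith analysis: on $\BT$ one has $\Om^{-1}\eta=f-\Om^{-1}h$, and both $f$ and $\Om^{-1}h$ lie in $L^p_m(\BT)$ (since $\Om^{-1}$ is analytic on $\fA_{r_0}$), so the rational function $\Om^{-1}\eta$ is in $L^p_m(\BT)$ and therefore pole-free on $\BT$, hence analytic on $\fA_{r_0}$. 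Second, one then has $f-\Om^{-1}\eta$ analytic on the inner half of $\fA_{r_0}$ and $\Om^{-1}\hat h$ analytic on the outer half, both with $L^p$ nontangential boundary values agreeing a.e.\ on $\BT$, and one needs a Privalov/Smirnov-type continuation theorem to conclude that they paste to an analytic function on $\fA_{r_0}$. That theorem is true, but you must invoke it explicitly; it is not the same as classical Morera-with-continuity, and the $L^p$ hypotheses on both sides are essential.

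The paper avoids this entirely by factoring $\Om=\Om_-\Xi\Om_+$ using the Wiener--Hopf type factorization from \cite{GtHJR21}, which reduces $T_\Om^{-1}(\cD_r)\subset\cD_r$ to the same statement for the plus and minus factors (handled by the forward-invariance argument applied to their inverses) and for the middle factor $\Xi$, for which Lemma 2.1 of \cite{GtHJRsub} gives a concrete decomposition $\Xi f=z^{-k}h+\eta$ with controlled denominators, so that the analyticity of $f$ on $\fA_r$ follows by inspection without any gluing lemma. Your route is more elementary in that it avoids the factorization machinery, but it transfers the difficulty to a boundary-continuation result that you would still have to state and prove (or cite precisely); the ``local Smith-type'' bookkeeping you suggest is both harder and less natural than the $L^p$-membership argument sketched above, so I would not pursue it.
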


\begin{proof}[\bf Proof]
Let $1<r<r_0$. We start by proving \eqref{incl}, except for the first inclusion.

The second inclusion is trivial. If $f\in\cD_r$, then $f$ has an analytic extension to $r\BD$, so that, in particular, $f$ is analytic on $\overline{\BD}$.

The argument to show that $H_m(\overline{\BD}) \subset \Dom (T_\Om)$ is similar to that in the scalar case \cite[Theorem 6.2]{GtHJ19b}. Note that each $f\in H_m(\overline{\BD})$ is also in $\cD_{r'}$ for some $1<r'$ sufficiently close to $1$. Hence it suffices to show that $\cD_r \subset \Dom (T_\Om)$. Let $f\in\cD_r$. Since $\Om$ is rational, $\Om f$ is meromorphic on $r\BD$ with finitely many poles, each of finite multiplicity. Computing the residues of the poles of each of the entries in the vector function $\Om f$ it is easy to write $\Om f$ in the form $g+\rho$ with $g\in L^p_m$ and $\rho\in\Rat^m_0(\BT)$, showing that $f\in \Dom(T_\Om)$. Hence, we have proved the second inclusion.

Next we show that $T_\Om$ maps $\cD_r$ into $\cD_r$ and that \eqref{TomVsTomr} holds. Let $f\in\cD_r$. Hence $f$ has an analytic extension to $r\BD$ and $|f(z)|^p$ is integrable on $r\BT$, that is, $f\in H^p_m(r\BT)$. By \eqref{incl}, $f\in\Dom(T_\Om)$ and hence $\Om f=g+\rho$ for some $g\in L^p_m$ and $\rho\in\Rat_0^m(\BT)$. Write $g=g_+ + g_-$ with $g_+\in H^p_m$ and $g_-\in K^p_m$, so that $T_\Om f=g_+$. Since $f$ is analytic on $r \BD$ and $\Om$ and $\rho$ are rational with no poles in $\fA_r\setminus\BT$, it follows that $g=\Om f - \rho$ must be analytic in $\fA_r\setminus\BT$ with the poles on $\BT$ all having finite multiplicity. However, $g\in L_m^p$, and hence cannot have poles of finite multiplicity on $\BT$. Thus $g$ is analytic in $\fA_r$. Hence, $g_+$ is analytic on $r\BD$ and $g_-$ on $\BC\setminus \overline{r^{-1}\BD}$. Since $r<r_0,$ by an argument similar to that in the first part of the proof, it follows that $g_+\in H^p_m(r\BT)$ and $g_-\in K^p_m(r^{-1}\BT)$. This implies that $g_{+,r}(z)=g_+(rz)$ and $g_{-,r}(z)=g_-(rz)$ define functions in $H^p_m$ and $K^{p}_m$, respectively. In particular, $g_+\in\cD_r$ and it follows that $T_\Om$ maps $\cD_r$ into $\cD_r$.
Moreover, $\rho$ is a rational matrix function with poles only in $\BT$, so that $\rho_r$ only has poles inside $\BD$ and we obtain that $\rho_r\in K^p_m$. Note further that on $\BT$
\begin{align*}
\Om_r(z)f_r(z)&=\Om(rz)f(rz)=g(rz)+\rho(rz)=g_r(z)+\rho_r(z)\\
&=g_{+,r}(z)+g_{-,r}(z)+\rho_r(z).
\end{align*}
Therefore, we have
\[
T_{\Om_r}f_r=\BP(\Om_r f_r)=\BP(g_{+,r}+g_{-,r}+\rho_r)=g_{+,r}=(T_\Om f)_r.
\]

Finally, we show that $T_\Om^{-1}(\cD_r)\subset \cD_r$. Since $0\in \cD_r$, this proves in particular that $\kernel T_\Om \subset \cD_r$ and hence the first inclusion of \eqref{incl}. To prove the inclusion, we require the Wiener-Hopf type factorization from \cite[Theorems 1.1 and 1.2]{GtHJR21}, namely  $\Om$ can be factored as
\begin{equation}\label{WH}
\Om(z)=\Om_-(z)\Xi(z)\Om_+(z),\quad\mbox{with} \ \ \Xi(z)=z^{-k}\Om_\circ(z)P_0(z)
\end{equation}
for some integer $k\geq 0$, $\Om_+,\Om_\circ,\Om_-\in \Rat^{m \times m}$ and $P_0\in\cP^{m \times m}$ such that $\Om_-$ and $\Om_-^{-1}$ are both minus functions (i.e., no poles outside $\BD$), $\Om_+$ and $\Om_+^{-1}$ are both plus functions (i.e., no poles in $\overline{\BD}$), $\Om_\circ=\diag_{j=1}^m(\phi_j)$ with $\phi_j\in\Rat(\BT)$ having no zeroes and having roots only on $\BT$ (in \cite[Theorems 1.1]{GtHJR21}, $\phi_j\in\Rat$ can have zeroes on $\BT$, but this cannot occur since $\Om$ has no zeroes on $\BT$), and $P_0$ a lower triangular polynomial with $\det(P_0(z))=z^N$ for some integer $N\geq 0$. It then follows from Theorem 1.3 in \cite{GtHJR21} that
\[
T_{\Om}=T_{\Om_-}T_\Xi T_{\Om_+} \quad\mbox{and}\quad
T_{\Om_-}^{-1}=T_{\Om_-^{-1}},\ \ T_{\Om_+}^{-1}=T_{\Om_+^{-1}}.
\]
To show that $T_\Om^{-1}(\cD_r)\subset \cD_r$, it suffices to show that $T_\Xi^{-1}(\cD_r)\subset \cD_r$, $T_{\Om_+}^{-1}(\cD_r)\subset \cD_r$ and $T_{\Om_-}^{-1}(\cD_r)\subset \cD_r$. The latter two inclusions follow from the the fact that $T_{\Om_-}$ and $T_{\Om_+}$ are invertible with inverses $T_{\Om_-}^{-1}=T_{\Om_-^{-1}}$ and $T_{\Om_+}^{-1}=T_{\Om_+^{-1}}$ along with the argument from the previous paragraph applied to $T_{\Om_-^{-1}}$ and $T_{\Om_+^{-1}}$ showing that $\cD_r$ is an invariant subspace for these two operators; for the latter, note that $\Om_-$ and $\Om_+$  do not have zeroes and poles on the annulus $\fA_r$. Hence it remains to show that $T_\Xi^{-1}(\cD_r)\subset \cD_r$.

Let $f\in \Dom(T_\Xi)$ such that $T_\Xi f\in \cD_r$. Since $f\in \Dom(T_\Xi)$, we can apply Lemma 2.1 from \cite{GtHJRsub} to conclude that $\Xi(z) f(z) =z^{-k} h(z)  + \eta(z)$ with $h\in H^p_m$ and $\eta=(\eta_1,\ldots,\eta_m)\in \Rat_0^m(\BT)$ of the form $\eta_j=r_j/q_j\in \Rat_0(\BT)$ with $q_j$ the denominator of the $j$-th diagonal element of $\Om_0$. Write $z^{-k} h(z) = h_-(z) + h_+(z)$ with $h_+\in H^p_m$ and $h_-\in K^p_m$. It is clear that $h_-$ is analytic on $\BC\setminus \{0\}$. Moreover, $h_+=T_\Xi f$, so that $h_+\in\cD_r$, by assumption. Since $\det \Xi\not\equiv 0$, we have $f=\Xi^{-1}h_+ + \Xi^{-1}h_- +\Xi^{-1}\eta$. Note that $\Xi^{-1}$ has no poles on the annulus $\fA_r$ and no zeroes on $\fA_r\setminus\BT$. Since $\Xi^{-1}$, $f$, $h_+$ and $h_-$ don't have poles on $\BT$, neither can $\Xi^{-1}\eta$. It follows that $\Xi^{-1}\eta$, $\Xi^{-1}h_+$ and $\Xi^{-1}h_-$ are all analytic on $\fA_r$. Therefore, $f$ is analytic on $\fA_r$. However, $f\in H^p_m$, so that $f$ in fact is analytic on $r\BT$. Using that $\Xi^{-1}$ is rational, $g_+\in\cD_r$, $g_-$ and $\rho$ are continuous on $r\BT$ it follows that $f$ is $p$-integrable on $r\BT$, and hence $f\in \cD_r$.
\end{proof}

\begin{lemma}\label{L:Fredholm}
Let $\Om\in\Rat^{m \times m}$ with $\det \Om \not\equiv 0$. Then $T_\Om$ is Fredholm if and only if $\Om$ has no zeroes on $\BT$. Assume $T_\Om$ is Fredholm and let $r_0>1$ be such that $\Om$ has no zeroes in the annulus $\fA_{r_0}$ and no poles in $\fA_{r_0}\setminus\BT$. Then $T_{\Om_r}$ is bounded and Fredholm for each $1<r<r_0$.
\end{lemma}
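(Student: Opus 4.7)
The ``if and only if'' statement is precisely the Fredholm criterion recalled from \cite{GtHJR21} in the introduction, so no further argument is needed there. The plan for the second part is to translate the hypotheses on $\Om$ into pole- and zero-free conditions on $\Om_r$ in a neighborhood of $\BT$, and then invoke the first part of the lemma applied to $\Om_r$ in place of $\Om$.

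First I would observe that, since $\Om_r(z)=\Om(rz)$, a point $z_0$ is a pole (respectively, zero) of $\Om_r$ if and only if $rz_0$ is a pole (respectively, zero) of $\Om$. For $z_0\in\BT$ and $1<r<r_0$, the point $rz_0$ has modulus $r$ with $r_0^{-1}<1<r<r_0$, so $rz_0\in r\BT\subset \fA_{r_0}\setminus\BT$. By the hypothesis that $\Om$ has no poles in $\fA_{r_0}\setminus\BT$ and no zeroes in $\fA_{r_0}$, neither a pole nor a zero of $\Om$ lies at $rz_0$; hence $\Om_r$ has neither poles nor zeroes anywhere on $\BT$. In fact the same argument with $z_0$ replaced by a nearby point shows that $\Om_r$ is pole- and zero-free on some open neighborhood of $\BT$.

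Next, I would use the pole-freeness on (a neighborhood of) $\BT$ to upgrade $\Om_r$ from merely rational to analytic on a neighborhood of $\BT$; in particular $\Om_r$ is bounded on $\BT$, so multiplication by $\Om_r$ followed by the Riesz projection $\BP$ defines a bounded operator on $H^p_m$, which coincides with the Toeplitz-like operator $T_{\Om_r}$. Finally, since $\Om_r$ has no zeroes on $\BT$, the first assertion of the lemma, applied to $\Om_r$ in place of $\Om$, yields that $T_{\Om_r}$ is Fredholm. No genuine obstacle is expected; the only care needed is to check that the translated inclusion $r\BT\subset\fA_{r_0}\setminus\BT$ uses both inequalities $1<r$ (to avoid hitting $\BT$ itself, where $\Om$ is allowed to have poles) and $r<r_0$ (to stay inside the annulus in which the pole- and zero-free hypotheses hold).
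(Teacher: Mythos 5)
Your treatment of the second assertion is correct and is essentially the same as the paper's, just with the scaling argument spelled out in more detail: the paper simply observes that, by the choice of $r_0$, the function $\Om_r$ has no poles and no zeroes on $\BT$ for $1<r<r_0$, whence $T_{\Om_r}$ is a bounded Fredholm Toeplitz operator; you correctly justify this via the observation that $z_0\in\BT$ corresponds to $rz_0\in r\BT\subset\fA_{r_0}\setminus\BT$.

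The gap is in your dismissal of the first assertion. You write that the ``if and only if'' is ``precisely the Fredholm criterion recalled from \cite{GtHJR21} in the introduction, so no further argument is needed,'' but the paper's own proof explicitly states the opposite: ``For $T_{\Om}$ the result is not included in \cite{GtHJR21} but follows from the results proved there.'' What \cite{GtHJR21} actually provides (its Theorem 1.4) is a Fredholm criterion phrased in terms of a Wiener--Hopf type factorization $\Om=\Om_-\Xi\Om_+$ with $\Xi=z^{-k}\Om_\circ P_0$: namely $T_\Om$ is Fredholm if and only if the numerators of the diagonal entries of $\Om_\circ$ are constant (under co-primeness). The bridge from that statement to ``$\Om$ has no zeroes on $\BT$'' is a genuine, if short, argument: one must check that $\Om_-$, $\Om_+$ and $P_0$ contribute no zeroes on $\BT$ (using that $\Om_\pm^{-1}$ are minus/plus functions and $\det P_0(z)=z^N$), so that zeroes of $\Om$ on $\BT$ are exactly the zeroes of $\Om_\circ$, and that since the roots of the diagonal entries of $\Om_\circ$ lie on $\BT$ by construction, having no zeroes there is the same as having constant numerators. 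You should supply this derivation rather than point back to the introduction, which is merely restating the lemma informally.
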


\begin{proof}[\bf Proof]
For $1<r<r_0$, by definition of $r_0$ it is clear that $T_{\Om_r}$ has no poles and no zeroes on $\BT$, so that $T_{\Om_r}$ is bounded and Fredholm. For $T_{\Om}$ the result is not included in \cite{GtHJR21} but follows from the results proved there.
Indeed, consider a Wiener-Hopf type decomposition of $\Om$ as in the proof of Proposition \ref{P:TomVsTomr}, e.g., as in \eqref{WH}. Since $\Om_+^{-1}$ is a plus function and $\Om_-^{-1}$ is a minus function, they do not have poles on $\BT$. Also, since $\det P_0(z)=z^N$ for some integer $N\geq 0$, $P_{0}^{-1}$ as a function in $\Rat^{m\times m}$ can only have a pole at 0, so that $P_0$ also does not have zeroes on $\BT$. This shows that the zeroes of $\Om$ on $\BT$ correspond to the zeroes of $\Om_\circ$. Since $\Om_\circ$ is a diagonal matrix function, its zeroes correspond to the zeroes of its diagonal elements, which are all on $\BT$, by construction. Since $\Om$ has no zeroes on $\BT$, this implies that the numerators of the diagonal elements of $\Om_\circ$ are constant, assuming the numerators and denominators are co-prime. The statement for $T_\Om$ now follows from the fact that $T_\Om$ is Fredholm if and only if the numerators (assuming co-primeness) in $\Om_\circ$ are constant, according to Theorem 1.4 in \cite{GtHJR21}.
\end{proof}

As a consequence of Proposition \ref{P:TomVsTomr}, it is easy to show that the dimensions of the kernels of $T_\Om$ and $T_{\Om_r}$ are the same.

\begin{corollary}\label{C:Kernel}
Let $\Om\in\Rat^{m \times m}$ with $\det \Om \not\equiv 0$ and assume that $\Om$ has no zeroes on $\BT$.  Let $r_0>1$ be such that $\Om$ has no zeroes in the annulus $\fA_{r_0}$ and no poles in $\fA_{r_0}\setminus\BT$. For each $1<r<r_0$ we have
\[
\kernel T_\Om=\{f_{1/r} \colon f\in \kernel T_{\Om_r}\}\ands
\kernel T_{\Om_r}=\{f_{r} \colon f\in \kernel T_{\Om}\}.
\]
In particular, we have $\dim \kernel T_\Om= \dim \kernel T_{\Om_r}$.
\end{corollary}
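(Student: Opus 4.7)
The plan is to exhibit an explicit linear bijection between $\kernel T_\Om$ and $\kernel T_{\Om_r}$ via the dilation map $\sigma_r: f \mapsto f_r$, where $f_r(z)=f(rz)$. The analytic heavy lifting has already been done in Proposition \ref{P:TomVsTomr}; the corollary is essentially a formal consequence of the intertwining identity \eqref{TomVsTomr} combined with the inclusions \eqref{incl}, in particular with $\kernel T_\Om \subset \cD_r$.

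First I would check that $\sigma_r$ is a linear bijection from $\cD_r$ onto $H^p_m$. Injectivity is immediate from analytic continuation (any $f\in \cD_r$ extends analytically to $r\BD$ and is thus determined by its values on the smaller disc). For surjectivity, given $g \in H^p_m$, define $f(z) := g(z/r)$; this $f$ is analytic on $r\BD$ and belongs to $H^p_m$ since dilation by $1/r<1$ only improves integrability on $\BT$, and by construction $f_r = g \in H^p_m$, so $f \in \cD_r$. Hence $\sigma_r^{-1}: g \mapsto g_{1/r}$ is well defined.

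Next I would establish the forward inclusion $\{f_r : f\in \kernel T_\Om\} \subset \kernel T_{\Om_r}$: for any $f \in \kernel T_\Om$, we have $f \in \cD_r$ by \eqref{incl}, so \eqref{TomVsTomr} gives $T_{\Om_r} f_r = (T_\Om f)_r = 0$. Conversely, for $g \in \kernel T_{\Om_r}$, set $f := g_{1/r} \in \cD_r$. The inclusion $\cD_r \subset \Dom(T_\Om)$ from \eqref{incl} ensures $f \in \Dom(T_\Om)$, and \eqref{TomVsTomr} yields $(T_\Om f)_r = T_{\Om_r} f_r = T_{\Om_r} g = 0$. Applying injectivity of $\sigma_r$ (to $T_\Om f \in H^p_m$, which automatically lies in $\cD_r$ by Proposition \ref{P:TomVsTomr}) gives $T_\Om f = 0$, so $f\in \kernel T_\Om$ and $\sigma_r(f)=g$. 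This yields the reverse inclusion.

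Combining these two inclusions produces the claimed set equalities, and the restriction of the linear map $\sigma_r$ to $\kernel T_\Om$ is then a linear isomorphism onto $\kernel T_{\Om_r}$, so $\dim \kernel T_\Om = \dim \kernel T_{\Om_r}$. I do not anticipate a real obstacle here: everything substantive (the invariance of $\cD_r$ under $T_\Om$, the inclusion of $\kernel T_\Om$ in $\cD_r$, and the commutation $(T_\Om f)_r = T_{\Om_r} f_r$) is already recorded in Proposition \ref{P:TomVsTomr}; the only small point that merits a line of justification is that $g_{1/r}$ genuinely lies in $\cD_r$ for $g \in H^p_m$, which follows directly from the definition \eqref{cD_r}.
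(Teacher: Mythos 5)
Your proof is correct and follows essentially the same route as the paper's: both exploit the bijection $\sigma_r\colon\cD_r\to H^p_m$, the inclusion $\kernel T_\Om\subset\cD_r$ from \eqref{incl}, and the intertwining identity \eqref{TomVsTomr}, with the paper compressing the two-sided argument into a single chain of equivalences. Your version merely spells out the bijectivity of $\sigma_r$ and the reverse inclusion in more detail; there is no substantive difference.
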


\begin{proof}[\bf Proof]
Note that the formula for $\kernel T_{\Om_r}$ makes sense, since $\kernel T_\Om\subset \cD_r$. Since the map $f\mapsto f_r$ defines a bijection from $\cD_r$ onto $H^p_m$, with inverse map $h\mapsto h_{1/r}$, it suffices to prove one of the two formulas. From Proposition \ref{P:TomVsTomr} it follows that
\[
f\in \kernel T_\Om\subset\cD_r \ \Longleftrightarrow \
0= T_\Om f \ \Longleftrightarrow \ 0=(T_\Om f)_r=T_{\Om_r} f_r.
\]
This proves the formula for $\kernel T_{\Om_r}$.
\end{proof}

With a bit more work we can prove a similar result for the codimensions of the ranges of $T_\Om$ and $T_{\Om_r}$.

\begin{corollary}\label{C:RangeCompl}
Let $\Om\in\Rat^{m \times m}$ with $\det \Om \not\equiv 0$ and assume that $\Om$ has no zeroes on $\BT$.  Let $r_0>1$ be such that $\Om$ has no zeroes in the annulus $\fA_{r_0}$ and no poles in $\fA_{r_0}\setminus\BT$. Let $1<r<r_0$ and let $\cX$ be a complement of $\Ran T_{\Om_r}$ in $H^p_m$. Then
\[
\cX_{1/r}:=\{h_{1/r}\colon h\in\cX\}
\]
is a complement of $\Ran T_{\Om}$.
In particular, we have
\[
\codim \Ran T_\Om= \codim \Ran T_{\Om_r}.
\]
\end{corollary}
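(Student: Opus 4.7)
The plan is to prove \(H^p_m=\Ran T_\Om\oplus\cX_{1/r}\) in two stages: first establish the analogous splitting inside the subspace \(\cD_r\), then bootstrap to all of \(H^p_m\) via a closedness-plus-density argument. The codimension identity will then be immediate, since \(h\mapsto h_{1/r}\) is injective and preserves dimension.

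First I would observe that \(f\mapsto f_r\) is a linear bijection from \(\cD_r\) onto \(H^p_m\), with inverse \(h\mapsto h_{1/r}\): the forward direction is built into the definition of \(\cD_r\), while for \(h\in H^p_m\) the function \(h_{1/r}\) is analytic on the larger disc \(r\BD\) and hence lies in \(H^p_m\), and \((h_{1/r})_r=h\) shows \(h_{1/r}\in\cD_r\). Combining this bijection with the identity \((T_\Om f)_r=T_{\Om_r}f_r\) for \(f\in\cD_r\), the invariance \(T_\Om(\cD_r)\subset\cD_r\) and the inclusion \(T_\Om^{-1}(\cD_r)\subset\cD_r\) from Proposition \ref{P:TomVsTomr}, one sees that this bijection carries \(\Ran T_\Om\cap\cD_r\) precisely onto \(\Ran T_{\Om_r}\), and by construction carries \(\cX_{1/r}\) onto \(\cX\). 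Applying the inverse bijection to the hypothesis \(H^p_m=\Ran T_{\Om_r}\oplus\cX\) therefore yields
\[
\cD_r=(\Ran T_\Om\cap\cD_r)\oplus\cX_{1/r},
\]
so in particular \(\cX_{1/r}\subset\cD_r\) and \(\cX_{1/r}\cap\Ran T_\Om=\cX_{1/r}\cap(\Ran T_\Om\cap\cD_r)=\{0\}\).

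The remaining task, which is the step I expect to require the most care, is to upgrade the sum to all of \(H^p_m\). By Lemma \ref{L:Fredholm}, both \(T_\Om\) and \(T_{\Om_r}\) are Fredholm, so \(\Ran T_\Om\) is closed of finite codimension and \(\cX\), hence \(\cX_{1/r}\), is finite-dimensional. Since the sum of a closed subspace and a finite-dimensional subspace of a Banach space is again closed, \(\Ran T_\Om+\cX_{1/r}\) is closed in \(H^p_m\). By the previous step it already contains \(\cD_r\), and \(\cD_r\) is dense in \(H^p_m\) because it contains every vector of analytic polynomials (which extend analytically to all of \(\BC\)) and such polynomial vectors are dense in \(H^p_m\) for \(1<p<\infty\). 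Therefore \(\Ran T_\Om+\cX_{1/r}\supset\overline{\cD_r}=H^p_m\), which combined with the trivial intersection gives \(H^p_m=\Ran T_\Om\oplus\cX_{1/r}\). Finally, \(\codim\Ran T_\Om=\dim\cX_{1/r}=\dim\cX=\codim\Ran T_{\Om_r}\) yields the codimension identity.

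The only nontrivial ingredients are the transport of decompositions along \(f\mapsto f_r\), which is already packaged in Proposition \ref{P:TomVsTomr}, and the standard facts that a closed-plus-finite-dimensional sum of subspaces in a Banach space is closed and that analytic polynomials are dense in \(H^p_m\) for \(1<p<\infty\); no further analytic input should be needed.
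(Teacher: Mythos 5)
Your proof is correct and follows essentially the same route as the paper: establish the direct sum decomposition inside $\cD_r$ by transporting $H^p_m=\Ran T_{\Om_r}\oplus\cX$ back along $h\mapsto h_{1/r}$ (using Proposition \ref{P:TomVsTomr} to identify $\Ran T_\Om\cap\cD_r$ with the image $T_\Om(\cD_r)$, which corresponds to $\Ran T_{\Om_r}$), and then pass to $H^p_m$ by combining closedness of $\Ran T_\Om+\cX_{1/r}$ (Fredholm range plus a finite-dimensional subspace) with density of $\cD_r$. The only cosmetic difference is that you phrase the key step as the equality $\Ran T_\Om\cap\cD_r=T_\Om(\cD_r)$, whereas the paper verifies triviality of $\Ran T_\Om\cap\cX_{1/r}$ directly by the same use of $T_\Om^{-1}(\cD_r)\subset\cD_r$.
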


\begin{proof}[\bf Proof]
Note that by assumption $T_\Om$ and $T_{\Om_r}$ are both Fredholm. Hence they have closed ranges and $\cX$ is finite dimensional. By Proposition \ref{P:TomVsTomr} we have that
\[
\{g_r\colon g\in T_\Om(\cD_r)\}=\Ran T_{\Om_r},
\]
and thus
\[
T_\Om(\cD_r)=(\Ran T_{\Om_r})_{1/r}:=\{g_{1/r}\colon g\in \Ran T_{\Om_r}\}.
\]
Since $\Ran T_{\Om_r} + \cX$ is a direct sum, the same is true for $(\Ran T_{\Om_r})_{1/r} + \cX_{1/r}=T_\Om(\cD_r) + \cX_{1/r}$, since $h\in (\Ran T_{\Om_r})_{1/r} \cap \cX_{1/r}$ implies that $h\in\cD_r$ and $h_r\in \Ran T_{\Om_r} \cap\cX=\{0\}$, so that $h=0$. Also, since $\Ran T_{\Om_r} + \cX=H^p_m$, we have that $T_\Om(\cD_r) + \cX_{1/r}=\cD_r$. We claim that $\Ran T_\Om + \cX_{1/r}$ is also a direct sum. Indeed, let $h\in \Ran T_\Om  \cap \cX_{1/r}$. Since $h\in \Ran T_\Om$, we have $h=T_\Om f$ for some $f\in\Dom(T_\Om)$. Moreover, we have $h\in\cD_r$, because $\cX_{1/r}\subset \cD_r$. But then Proposition \ref{P:TomVsTomr} implies that also $f\in\cD_r$. Hence $h\in T_\Om(\cD_r) \cap \cX_{1/r}=\{0\}$. Finally, note that
\[
\cD_r=T_\Om(\cD_r) + \cX_{1/r} \subset \Ran T_\Om + \cX_{1/r} \subset H^p_m,
\]
and that $\Ran T_\Om + \cX_{1/r}$ is closed, since $\Ran T_\Om$ and $\cX_{1/r}$ are both closed and $\cX_{1/r}$ finite dimensional, using \cite[Proposition III.4.3]{C85}. The fact that $\cD_r$ is dense now implies that $\Ran T_\Om + \cX_{1/r} = H^p_m$ is a direct sum decomposition of $H^p_m$.
\end{proof}

\begin{proof}[\bf Proof of Theorem \ref{T:Main2}]
The claims follow directly by combining the results of Lemma \ref{L:Fredholm} and Corollaries \ref{C:Kernel} and \ref{C:RangeCompl}.
\end{proof}

\section{Invertibility of $T_\Om$ and the formula for $T_\Om^{-1}$}\label{S:TOm invert}

In this section we prove Theorem \ref{T:Main3} and Proposition \ref{P:Qform}. That invertibility of $T_\Om$ corresponds to the existence of a solution to the Riccati equation \eqref{Ricc} so that $A_\circ$ and $\al_\circ$ in \eqref{Acirc-alphacirc} are stable, and hence the pseudo-canonical factorization of $\Om$, follows easily from the results of the previous sections. To obtain the formula for $T_\Om$ and its block matrix representation in terms of the realization \eqref{realOmega} requires more work. We shall first present the operator factorization of $T_\Om$ corresponding to the pseudo-canonical factorization.

\begin{lemma}\label{L:matrep TOm}
Let $\Om\in\Rat^{m \times m}$ with $\det \Om \not\equiv 0$ and no zeroes on $\BT$ be given by the minimal realization \eqref{realOmega} with $A$ stable and $\al$ semi-stable. Assume that $\Om$ admits a pseudo-canonical factorization $\Om=\Psi\Theta$ as in item (ii) of Theorem \ref{T:Main1}, with $\Theta$ and $\Psi$ as in \eqref{ThetaPsi}. Then $T_\Om$ is given by
\[
T_\Om=T_\Psi T_\Th.
\]
In particular, $T_\Th$ is bounded and has a bounded inverse $T_{\Th}^{-1}=T_{\Th^{-1}}$. Moreover, $T_\Psi$ admits an upper triangular block Toeplitz matrix representation with respect to the standard basis of $H^p_m$ given by
\begin{equation}\label{TPsiMatRep}
\left[T_\Psi\right]_{i,j} = 0 \mbox{ if $j<i$},\quad
\left[T_\Psi\right]_{i,j} = \de \mbox{ if $j=i$},\quad
\left[T_\Psi\right]_{i,j} = \ga \al^{j-i-1}\be_0 \mbox{ if $j>i$}.
\end{equation}
\end{lemma}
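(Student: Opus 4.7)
The plan is to prove the three assertions in sequence: first the boundedness and explicit invertibility of $T_\Theta$, then the operator factorization $T_\Om = T_\Psi T_\Theta$, and finally the matrix representation of $T_\Psi$ by direct computation on standard basis blocks.

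For the first step, I would invoke Theorem \ref{T:Main1}, which guarantees that both $A$ and $A_\circ$ are stable. Combined with the realizations \eqref{ThetaPsi} and \eqref{ThetaPsiInvReal}, this shows that $\Theta$ and $\Theta^{-1}$ are analytic on $\overline{\BD}$ and bounded on $\BT$, hence both lie in $H^\infty_{m\times m}$. Therefore $T_\Theta$ and $T_{\Theta^{-1}}$ are bounded on $H^p_m$. Since $\Theta^{-1}$ is a plus function, the standard Toeplitz composition rule gives $T_\Theta T_{\Theta^{-1}} = T_{\Theta\Theta^{-1}} = I$ and symmetrically $T_{\Theta^{-1}}T_\Theta = I$, so $T_\Theta^{-1} = T_{\Theta^{-1}}$.

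For the factorization, the key observation is that $\Theta f \in H^p_m$ for every $f \in H^p_m$, so $T_\Theta f = \Theta f$ and $\Om f = \Psi\Theta f = \Psi(T_\Theta f)$. Comparing with the definitions of $\Dom(T_\Om)$ and $\Dom(T_\Psi)$ shows that $f \in \Dom(T_\Om)$ if and only if $T_\Theta f \in \Dom(T_\Psi)$; in that case the common decomposition $\Om f = h + \eta$ yields $T_\Om f = \BP h = T_\Psi(T_\Theta f)$.

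For the matrix representation, I would apply $T_\Psi$ to a generic basis block $z^j v$ with $v \in \BC^m$ and $j \geq 0$. The identity $z^j I - \al^j = (zI - \al)\sum_{k=0}^{j-1} z^{j-1-k}\al^k$ (valid since $zI$ commutes with $\al$) gives
\[
\Psi(z) z^j v = \delta z^j v + \sum_{k=0}^{j-1} z^{j-1-k}\gamma\al^k \be_\circ v + \gamma(zI - \al)^{-1}\al^j \be_\circ v.
\]
The first two terms are polynomials, hence in $H^p_m$. The third term is rational with poles only in $\sigma(\al) \subset \overline{\BD}$; decomposing $\al$ along its spectral subspaces for eigenvalues in $\BD$ and on $\BT$ writes this term as a sum of a $K^p_m$ element (from the interior poles, since the corresponding resolvent vanishes at $\infty$) and a $\Rat_0^m(\BT)$ element (from the boundary poles). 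Hence $z^j v \in \Dom(T_\Psi)$, and $T_\Psi(z^j v)$ equals the Riesz projection of the polynomial plus the $K^p_m$ piece, namely $\delta z^j v + \sum_{i=0}^{j-1} z^i \gamma\al^{j-1-i}\be_\circ v$ after re-indexing. Reading off the block coefficients produces precisely \eqref{TPsiMatRep}.

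The main obstacle is the third step: unlike in the classical bounded Toeplitz setting, $\Psi$ has poles on $\BT$ and $T_\Psi$ is unbounded, so one must carefully verify that each basis block $z^j v$ lies in $\Dom(T_\Psi)$ and then identify which part of $\Psi(z) z^j v$ contributes to the $h$-component in its $L^p_m + \Rat_0^m(\BT)$ decomposition. The spectral splitting of $\al$ into its interior and boundary parts is what makes this bookkeeping transparent.
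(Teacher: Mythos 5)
Your proof is correct and follows essentially the same route as the paper: establish the plus-function properties of $\Theta$ and $\Theta^{-1}$, deduce $T_\Om = T_\Psi T_\Theta$, and then read off the matrix of $T_\Psi$ by applying it to $z^j v$ and separating $\gamma\alpha^j(zI-\alpha)^{-1}\beta_\circ$ into a $K^p$ part and a $\Rat_0(\BT)$ part. The only noteworthy deviation is that you prove the operator factorization directly from the domain definitions (exploiting that $\Om f$ and $\Psi(\Theta f)$ are literally the same function), whereas the paper invokes Lemma~6.1 of \cite{GtHJR21}; your inline argument is a perfectly valid substitute for that citation.
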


\begin{proof}[\bf Proof]
Since $A$ and $A_\circ$ are both stable, it follows that $\Th$ and $\Th^{-1}$ have no poles inside the closed unit disc $\ov{\BD}$, that is, both are plus functions. It then follows from Lemma 6.1 in \cite{GtHJR21} (with $\Om=\Psi$ and $V=\Th$) that $T_\Om=T_\Psi T_\Th$. It is then also clear that $T_\Th$ is bounded with bounded inverse $T_{\Th}^{-1}=T_{\Th^{-1}}$.  Hence, it remains to determine the block matrix representation of $T_\Psi$. For this purpose, we compute $T_\Psi z^n$. That should produce a polynomial, and the coefficients of that polynomial, augmented with zeroes, give the $n$-th block column of the block matrix representation of $T_\Psi$. By successive applications of the formula
\[
(zI - \al)^{-1}=z^{-1}I + z^{-1} \al (zI - \al)^{-1}
\]
we see that
\begin{align*}
\Psi(z)z^n&
=\left(\de +\sum_{j=0}^{n-1} z^{-j-1}\ga \al^{j}\be_\circ + z^{-n}\ga \al^n (zI - \al)^{-1} \be_\circ\right)z^n\\
&=\delta z^n +\sum_{j=0}^{n-1} z^{n-j-1}\gamma\alpha^j\beta_\circ
+\gamma \alpha^n (z I-\alpha)^{-1}\beta_\circ.
\end{align*}
Now, because $\alpha$ has all its eigenvalues in the closed unit disc $\ov{\BD}$, the function
$\gamma \alpha^n (z I-\alpha)^{-1}\beta_\circ$ can be written as the sum of a function in $K^p_{m \times m}$ and a function in $\Rat_0^{m \times m}(\BT)$. Thus $T_\Psi z^n=\delta z^n +\sum_{j=0}^{n-1} z^{n-j-1}\gamma\alpha^j\beta_\circ$. This shows that the block matrix representation of $T_\Psi$ is indeed given by \eqref{TPsiMatRep}.
\end{proof}

\begin{proof}[\bf Proof of Theorem \ref{T:Main3}]
First assume $T_\Om$ is invertible. By Theorem \ref{T:Main2}, it follows that $T_{\Om_r}$ is invertible for all $1<r<r_0$, with $r_0$ as in Theorem \ref{T:Main2}. It then follows from Proposition \ref{P:OmrThm} that a solution $Q$ to the Riccati equation \eqref{Ricc} exists, and reasoning as in the proof of Theorem \ref{T:Main1} if follows that for this solution $Q$ the matrices $A_\circ$ and $\al_\circ$ in \eqref{Acirc-alphacirc} are stable.

Conversely, if the Riccati equation \eqref{Ricc} has a solution $Q$ such that $A_\circ$ and $\al_\circ$ are stable, then Theorem \ref{T:Main1} provides a pseudo-canonical factorization of $\Om$, and, due to the stability of $A_\circ$ and $\al_\circ$, this factorization extends to a canonical factorization of $\Om_r$ for $r>1$ small enough. It then follows from Proposition \ref{P:OmrThm} that $T_{\Om_r}$ is invertible, and, consequently, that $T_\Om$ is invertible, by Theorem \ref{T:Main2}.

From the factorization $T_\Om=T_\Psi T_\Theta$ and the boundedness and boundedly invertibility of $T_\Theta$, obtained in Lemma \ref{L:matrep TOm}, it follows that invertibility of $T_\Om$ corresponds to invertibility of $T_\Psi$ and, moreover, $T_\Om^{-1}=T_\Theta^{-1}T_\Psi^{-1}=T_{\Theta^{-1}}T_\Psi^{-1}$. Since $\Theta$, $\Theta^{-1}$ and $\Psi^{-1}$ have no poles on $\BT$, the Toeplitz operators $T_\Theta$, $T_{\Theta^{-1}}$ and $T_{\Psi^{-1}}$ are bounded and their block matrix representations are well understood. It remains to show that the block matrix representation of $T_\Psi^{-1}$ and $T_{\Psi^{-1}}$ are the same, as this would prove that these bounded operators coincide on the subspace of polynomials $\cP^m$ and equality would follow from their boundedness and the denseness of $\cP^m$ in $H^p_m$. Indeed, once it is proved that $T_\Psi^{-1}=T_{\Psi^{-1}}$, then the block matrix representation of $T_\Om^{-1}$ in \eqref{TOminvmat} follows directly from $T_\Om^{-1}=T_{\Theta^{-1}}T_{\Psi^{-1}}$ and the block matrix representations of $T_{\Theta^{-1}}$ and $T_{\Psi^{-1}}$. Hence, we need to show that the matrix entries with respect to the standard (block) basis of $H^p_m$ of $T_{\Psi^{-1}} T_{\Psi}$ and $T_{\Psi}T_{\Psi^{-1}}$ are $I_m$ on the diagonal and $0$ elsewhere.

The block matrix representation of $T_\Psi$, in terms of the realization \eqref{realOmega}, is given by \eqref{TPsiMatRep}. The realization formula of $\Psi^{-1}$ in item (ii) of Theorem \ref{T:Main1}, together with the stability of $\al_\circ$ shows that the block matrix representation of $T_{\Psi^{-1}}$ is an upper block triangular Toeplitz matrix that is determined by its first block row, which is given by
\[
\mat{
 \delta^{-1} & -\delta^{-1}\gamma\beta_\circ\delta^{-1} &  -\delta^{-1}\gamma\alpha_\circ\beta_\circ\delta^{-1} & -\delta^{-1}\gamma\alpha_\circ^2\beta_\circ\delta^{-1} & \cdots}.
\]
We first consider the block matrix representation of $T_{\Psi^{-1}}T_\Psi$. It is required to show that the $(i,j)$-th block entry $[T_{\Psi^{-1}}T_\Psi]_{ij}$ works out as
\[
[T_{\Psi^{-1}}T_\Psi]_{ij}=0 \mbox{ if $j<i$},\quad  [T_{\Psi^{-1}}T_\Psi]_{ij}=I_m \mbox{ if $j=i$}
,\quad  [T_{\Psi^{-1}}T_\Psi]_{ij}=0 \mbox{ if $j>i$}.
\]
The case where $j<i$ follows directly because the matrix representations of $T_{\Psi^{-1}}$ and $T_\Psi$ are both block upper triangular, and the case $j=i$ follows because the block diagonal elements are each others inverses. Hence, it remains to consider the case where $j>i$. For this purpose, notice that
\begin{equation}\label{AalIds}
\al - \al_\circ = \be_\circ \de^{-1} \ga \ands A- A_\circ = B D^{-1}C_\circ.
\end{equation}
For $j>i$ we have
\begin{align*}
[T_{\Psi^{-1}}T_\Psi]_{ij} & =-\de(\de^{-1} \ga \al_\circ^{j-i-1}\be_\circ\de^{-1})
- \sum_{k=0}^{j-i-2}  \ga \al^k \be_\circ \de^{-1} \ga \al_\circ^{j-i-2-k}\be_\circ \de^{-1} +\\
& \qquad \qquad+ \ga \al^{j-i-1}\be_\circ \de^{-1}.
\end{align*}
If $j=i+1$, then the summation in the middle term of the right-hand side is empty, and it is easy to see that the right-hand side collapses to 0 by a direct application of the first identity in \eqref{AalIds}. For $j>i+1$, using the first identity in \eqref{AalIds}, we see that
\begin{align}
& \sum_{k=0}^{j-i-2}  \ga \al^k \be_\circ \de^{-1} \ga \al_\circ^{j-i-2-k}\be_\circ \de^{-1}=
\sum_{k=0}^{j-i-2}  \ga \al^k (\al-\al_\circ) \al_\circ^{j-i-2-k}\be_\circ \de^{-1}= \notag\\
&\qquad \qquad =\sum_{k=0}^{j-i-2}  \ga \al^{k+1} \al_\circ^{j-i-2-k}\be_\circ \de^{-1} -\sum_{k=0}^{j-i-2}  \ga \al^k \al_\circ^{j-i-1-k}\be_\circ \de^{-1} \notag\\
&\qquad \qquad =\sum_{k=1}^{j-i-1}  \ga \al^{k} \al_\circ^{j-i-1-k}\be_\circ \de^{-1} -\sum_{k=0}^{j-i-2}  \ga \al^k \al_\circ^{j-i-1-k}\be_\circ \de^{-1}.\label{form1}
\end{align}
Inserting this formula back into the formula for $[T_{\Psi^{-1}}T_\Psi]_{ij}$, it follows that in the first summation in \eqref{form1} the term $k=0$ is added, while in the second summation the term $k=j-i-1$ is added, so that $[T_{\Psi^{-1}}T_\Psi]_{ij}=0$, as claimed. A similar computation, using the second identity of \eqref{AalIds}, shows that the block matrix representation of $T_\Psi T_{\Psi^{-1}}$ also corresponds to the block matrix representation of $I_{H^p_m}$.
\end{proof}

Finally, we turn to the proof of the last result in the introduction. Proposition \ref{P:Qform} is stated for $p=2$, but the lemma which we require for the proof also works for $p\neq 2$. For $r>1$, define the invertible linear map
\[
\Up :\cD_r \to H^p_m,\ \ \Up:f \mapsto f_r,\quad\mbox{with inverse}\quad \Up^{-1}: H^p_m \to H^p_m,\ \ \Up^{-1} :f \mapsto f_{r^{-1}}.
\]

\begin{lemma}\label{L:rtononr}
Let $\Om\in\Rat^{m \times m}$ with $\det \Om \not\equiv 0$ be given by the minimal realization \eqref{realOmega} with $A$ stable and $\al$ semi-stable. Define $r_0$ as in Theorem \ref{T:Main2}, $\cO_{C,A}$ as in \eqref{ObsCA} and $\cC_{\al,\be}$ as in \eqref{Conalbe}, and define $\cO_{C_r,A_r}$ and $\cC_{\al_r,\be_r}$ analogously, where $C_r,A_r,\al_r,\be_r$ are defined as in \eqref{OmrRealMatrices} and $1<r<r_0$. Then $\cO_{C,A}$, $\cO_{C_r,A_r}$ and $\cC_{\al_r,\be_r}$ are bounded, the range of $\cO_{C,A}$ is contained in $\cD_r$ and $\cD_r$ is contained in $\Dom(\cC_{\al,\be})$. Moreover, we have
\[
r\cC_{\al_r,\be_r}=\cC_{\al,\be}\Up^{-1},\quad \cO_{C_r,A_r}=r\Up\cO_{C,A} \ands T_{\Om_r}=\Up T_{\Om}\Up^{-1}.
\]
Furthermore, in case $T_{\Om}$ is invertible, then $T_{\Om_r}$ is invertible as well and $T_{\Om_r}^{-1}=\Up T_{\Om}^{-1}\Up^{-1}$.

\end{lemma}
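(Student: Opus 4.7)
The plan is to verify each assertion in sequence, relying on the holomorphy and stability properties encoded in the realization. First, the boundedness claims are routine: since $A$ is stable, the map $z\mapsto C(I-zA)^{-1}x$ extends holomorphically to the disc $\{|z|<1/\rho(A)\}$ and therefore lies in $H^p_m$ with norm linear in $|x|$, so $\cO_{C,A}$ is bounded. The same argument applies to $\cO_{C_r,A_r}$ because the hypothesis on $r_0$ forces $\rho(A)\leq 1/r_0$, hence $r\rho(A)<1$ and $rA$ is stable. Similarly, $\al_r=\al/r$ has spectral radius at most $1/r<1$, which makes $(e^{it}I-\al_r)^{-1}$ continuous on $\BT$ and gives boundedness of $\cC_{\al_r,\be_r}$.

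Second, I would handle the two inclusions. For $\Ran\cO_{C,A}\subset\cD_r$, the function $C(I-zA)^{-1}x$ is holomorphic on a disc of radius $1/\rho(A)\geq r_0>r$, so $(\cO_{C,A}x)_r(z)=C(I-rzA)^{-1}x$ belongs to $H^p_m$. For $\cD_r\subset\Dom(\cC_{\al,\be})$ the issue is subtler because, when $\al$ has eigenvalues on $\BT$, the integrand in the defining formula may have non-integrable singularities on the circle of integration. For $f\in\cD_r$, however, the matrix function $(zI-\al)^{-1}\be f(z)/z$ is holomorphic on the open annulus $1<|z|<r$ (since $\sigma(\al)\subset\overline{\BD}$ lies inside and $f$ extends analytically across $\BT$). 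One can therefore reinterpret the defining integral as a contour integral and deform it to $r'\BT$ for any $1<r'<r$, where the integrand is continuous and the integral converges, thereby showing $f\in\Dom(\cC_{\al,\be})$.

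Third, I would verify the three algebraic identities. The relation $T_{\Om_r}=\Up T_\Om\Up^{-1}$ is precisely a reformulation of equation \eqref{TomVsTomr} in Proposition \ref{P:TomVsTomr}. The identity $\cO_{C_r,A_r}=r\Up\cO_{C,A}$ follows by direct substitution: $(\cO_{C_r,A_r}x)(z)=rC(I-rzA)^{-1}x=r(\cO_{C,A}x)(rz)=r(\Up\cO_{C,A}x)(z)$. For the controllability identity $r\cC_{\al_r,\be_r}=\cC_{\al,\be}\Up^{-1}$, I would use the elementary relation $(e^{it}I-\al/r)^{-1}=r(re^{it}I-\al)^{-1}$ to rewrite the left-hand side as $\frac{1}{2\pi}\int(re^{it}I-\al)^{-1}\be g(e^{it})\,dt$, then view this as a contour integral over $r\BT$ via $z=re^{it}$. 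Using analyticity of the integrand in the annulus $1<|z|<r$, this contour integral equals the one over $\BT$ of the function in which $g$ is evaluated at $z/r$, which is precisely the (contour-deformed realization of) $\cC_{\al,\be}g_{r^{-1}}$.

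Finally, for the invertibility transfer: assuming $T_\Om$ is invertible, Proposition \ref{P:TomVsTomr} guarantees $T_\Om^{-1}(\cD_r)\subset\cD_r$, so the composition $\Up T_\Om^{-1}\Up^{-1}$ makes sense as a linear operator on $H^p_m$. Using $T_{\Om_r}=\Up T_\Om\Up^{-1}$ one checks directly that $\Up T_\Om^{-1}\Up^{-1}$ is the two-sided inverse of the bounded operator $T_{\Om_r}$, so $T_{\Om_r}$ is invertible with $T_{\Om_r}^{-1}=\Up T_\Om^{-1}\Up^{-1}$ (its boundedness then follows from the open mapping theorem applied to $T_{\Om_r}$). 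The main obstacle I expect will be the rigorous handling of $\cC_{\al,\be}$ when $\al$ has eigenvalues on $\BT$: the Lebesgue integral need not exist, and the controllability identity then requires a careful contour deformation argument together with precise tracking of the scalar factors that arise from the change of variable $z\mapsto z/r$.
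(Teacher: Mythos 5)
Your overall structure tracks the paper's proof quite closely: you argue boundedness from stability of $A$, $rA$, and $\al/r$ in the same way, the inclusion $\Ran\cO_{C,A}\subset\cD_r$ by holomorphic extension, the identity $\cO_{C_r,A_r}=r\Up\cO_{C,A}$ by direct substitution, the operator identity $T_{\Om_r}=\Up T_\Om\Up^{-1}$ as a reformulation of Proposition~\ref{P:TomVsTomr}, and the invertibility transfer by a direct two-sided check (the paper routes boundedness through Kato and the closed graph theorem; your open-mapping argument is an acceptable alternative).

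The genuine divergence, and the gap, lies in your treatment of $\cD_r\subset\Dom(\cC_{\al,\be})$ and the identity $r\cC_{\al_r,\be_r}=\cC_{\al,\be}\Up^{-1}$. You propose to ``reinterpret the defining integral as a contour integral and deform it to $r'\BT$,'' but this presupposes the thing you are trying to prove. The integrand $(zI-\al)^{-1}\be f(z)/z$ is holomorphic only on the \emph{open} annulus $1<|z|<r$; the eigenvalues of $\al$ on $\BT$ sit on the boundary contour itself, so the integral over $\BT$ is not a priori a convergent contour integral that Cauchy's theorem lets you slide to $r'\BT$. Showing that the $r'\BT$ integrals are $r'$-independent for $1<r'<r$ is easy, but to conclude $f\in\Dom(\cC_{\al,\be})$ you must still identify that common value with the integral over $\BT$ — and that limiting step is exactly where the non-integrability of $(e^{it}I-\al)^{-1}$ near eigenvalues of $\al$ on $\BT$ bites. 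You are aware of this (you call it ``the main obstacle''), but the sketch does not close it. The paper avoids the issue altogether by computing $\cC_{\al_r,\be_r}f$ via a Fourier coefficient pairing (citing Lemma~1.5, p.~81 of \cite{SS11}), which is legitimate because $(zI-\al_r)^{-1}\be_r$ is continuous on $\BT$; it then rewrites $\sum_k\al_r^k\be_r f_k$ algebraically in terms of $\al,\be$ and the coefficients $r^{-k}f_k$ of $\Up^{-1}f$, and the exponential factor $r^{-k}$ is what guarantees the resulting sum converges and therefore identifies $\cC_{\al,\be}(\Up^{-1}f)$. If you want to stick with the contour picture, you should at least argue via the Laurent/Fourier coefficients of the integrand on some circle $r'\BT$ (where everything converges) and show this is consistent with the definition of $\cC_{\al,\be}$ on $\cD_r$, rather than asserting a deformation onto the boundary contour $\BT$.

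One smaller point worth checking when you make the scalar bookkeeping precise: carrying out the change of variable $w=re^{it}$ in $\cC_{\al_r,\be_r}f=\frac{1}{2\pi}\int(re^{it}I-\al)^{-1}\be f(e^{it})\,dt$ turns it into $\frac{1}{2\pi i}\oint_{r\BT}(wI-\al)^{-1}\be(\Up^{-1}f)(w)\,\frac{dw}{w}$, and the measure $dw/w$ is scale-invariant, so no $r$-factor appears from the substitution itself; make sure the factor of $r$ in the stated identity is recovered from the correct Fourier-pairing convention for $\cC_{\al,\be}$ rather than from the change of contour.
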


\begin{proof}[\bf Proof]
Since the realization \eqref{realOmega} of $\Om$ is minimal, it follows from the definition of $r_0$ that $A_r=rA$ is still stable. Since $r>1$ and $\al$ is semi-stable, $\al_r=r^{-1}\al$ is stable. This implies the boundedness of $\cO_{C,A}$, $\cO_{C_r,A_r}$ and $\cC_{\al_r,\be_r}$, as well as the fact that $\cO_{C,A}$ maps into $\cD_r$. The identity $\cO_{C_r,A_r}=r\Up\cO_{C,A}$ is straightforward from the definitions.

To show that $\cD_r$ is in the domain of $\cC_{\al,\be}$ and that $r\cC_{\al_r,\be_r}=\cC_{\al,\be}\Up^{-1}$ holds on $H^p_m$, let $f(z)=\sum_{k=0}^\infty f_k z^k \in H^p_m$, then $f$ is integrable in $\BT$ and so is the rational matrix function $(zI-\al_r)^{-1}\be_r=\sum_{k=0}^\infty \al_r^k \be_r z^k$. It follows from Lemma 1.5 on page 81 of \cite{SS11} that
\begin{align*}
\cC_{\al_r,\be_r}f&= \sum_{k=0}^\infty \al_r^k \be_r f_k =  \sum_{k=0}^\infty r^{-k-1} \al^k \be f_k
=  r^{-1}\sum_{k=0}^\infty \al^k \be r^{-k} f_k\\
&= r^{-1} \cC_{\al,\be}f_{1/r}= r^{-1} \cC_{\al,\be} \Up^{-1}f.
\end{align*}
The above computation shows in particular that $\Up^{-1}f$ is in $\Dom(\cC_{\al,\be})$ for each $f\in H^p_m$ so that $\cD_r\subset \Dom(\cC_{\al,\be})$.

The relation between $T_\Om$ and $T_{\Om_r}$ in \eqref{TomVsTomr} implies that $\Up T_\Om|_{\cD_r}=T_{\Om_r}\Up$. Since the range of $\Up^{-1}$ is equal to $\cD_r$ and $\Up\Up^{-1}=I$, we have $T_{\Om_r}=\Up T_\Om \Up^{-1}$, as claimed.

Assume $T_{\Om}$ is invertible. By Theorem \ref{T:Main2}, also $T_{\Om_r}$ is invertible. By Proposition \ref{P:TomVsTomr}, $T_\Om$ maps $\cD_r$ into $\cD_r$ and $T_\Om^{-1}$ also maps $\cD_r$ into $\cD_r$. In particular, the operator $\Up T_\Om^{-1} \Up^{-1}$ is a well-defined linear map on $H^p_m$, which is closed by \cite[Problem III.5.7]{K80}, and hence bounded by the closed graph theorem. To see that $T_{\Om_r}^{-1}=\Up T_\Om^{-1} \Up^{-1}$, note that if $f\in H^p_m$, then $\Up^{-1}f$ and $T_\Om \Up^{-1}f$ are in $\cD_r$ so that
\begin{align*}
(\Up T_\Om^{-1} \Up^{-1})(\Up T_\Om \Up^{-1})f
&= \Up T_\Om^{-1} \Up^{-1}\Up (T_\Om \Up^{-1}f)\\
&= \Up T_\Om^{-1} T_\Om \Up^{-1}f = \Up  \Up^{-1}f = f.
\end{align*}
Hence $(\Up T_\Om^{-1} \Up^{-1})(\Up T_\Om \Up^{-1})=I$. Similarly one obtains $(\Up T_\Om \Up^{-1})(\Up T_\Om^{-1} \Up^{-1})\allowbreak=I$. Hence, $T_{\Om_r}^{-1}=\Up T_\Om^{-1} \Up^{-1}$.
\end{proof}

With the identities of the previous lemma, we can now prove Proposition \ref{P:Qform}.

\begin{proof}[\bf Proof of Proposition \ref{P:Qform}] Set $p=2$ and identify $H^2_m$ and $\ell^2_m$ in the usual way.
According to \cite{FKR10}, the matrix $Q$ in Proposition \ref{P:OmrThm} is given by $Q=\cC_{\al_r,\be_r}T_{\Om_r}^{-1}\cO_{C_r,A_r}$. It was already noted in Proposition \ref{P:OmrThm} that $Q$ is independent of the value of $1<r<r_0$. That also $Q=\cC_{\al,\be}T_{\Om}^{-1}\cO_{C,A}$ now follows directly from the identities derived in Lemma \ref{L:rtononr}.
\end{proof}

\subsection*{Acknowledgements}

This work is based on research supported in part by the National Research Foundation of South Africa (NRF, Grant Numbers 118513, 127364 and 145688) and the DSI-NRF Centre of Excellence in Mathematical and Statistical Sciences (CoE-MaSS). Any opinion, finding and conclusion or recommendation expressed in this material is that of the authors and the NRF and CoE-MaSS do not accept any liability in this regard.

\medskip
\noindent
{\it Declaration of interest:} none.

\end{document}